\newtheorem{theorem}{Theorem}[section]
\newtheorem{corollary}[theorem]{Corollary}
\newtheorem{example}[theorem]{Example}
\newtheorem{lemma}[theorem]{Lemma}
\newtheorem{proposition}[theorem]{Proposition}
\newtheorem{remark}[theorem]{Remark}
\newenvironment{proof}[1][Proof]{\noindent\textbf{#1.} }{\ \rule{0.5em}{0.5em}}
\begin{document}

\title{On intersections of two-sided ideals of Leavitt path
algebras\footnotetext{2010 \textit{Mathematics Subject Classification}: 16D70;
\textit{Key words and phrases:} Leavitt path algebras, arbitrary graphs, prime
ideals, intersections and factorizations.}}
\author{Songul Esin$^{(1)}$, Muge Kanuni$^{(2)}$ and Kulumani M. Rangaswamy$^{(3)}$\\$^{(1)}$ Tuccarbasi sok. Kase apt. No: 10A/25 Erenkoy \\Kadikoy, Istanbul, Turkey.\\E-mail: songulesin@gmail.com\\$^{(2)\text{ }}$Department of Mathematics, Duzce University, \\Konuralp Duzce, Turkey.\\E-mail: mugekanuni@duzce.edu.tr\\$^{(3)}$Department of Mathematics, University of Colorado,\\Colorado Springs, CO. 80919, USA. \\E-mail: krangasw@uccs.edu}
\date{}
\maketitle

\begin{abstract}
Let $E$ be an arbitrary directed graph and let $L$ be the Leavitt path algebra
of the graph $E$ over a field $K$. It is shown that every ideal of $L$ is an
intersection of primitive/prime ideals in $L$ if and only if the graph $E$
satisfies Condition (K). Uniqueness theorems in representing an ideal of $L$
as an irredundant intersection and also as an irredundant product of finitely
many prime ideals are established. Leavitt path algebras containing only
finitely many prime ideals and those in which every ideal is prime are
described. Powers of a single ideal $I$ are considered and it is shown that
the intersection ${\displaystyle\bigcap\limits_{n=1}^{\infty}}I^{n}$ 
is the largest graded ideal of $L$ contained in $I$. This leads to an
analogue of Krull's theorem for Leavitt path algebras.

\end{abstract}

\section{Introduction and Preliminaries}

Leavitt path algebras $L_{K}(E)$ of directed graphs $E$ over a field $K$ are
algebraic analogues of graph $C^{\ast}$-algebras $C^{\ast}(E)$ and have
recently been actively investigated in a series of papers (see e.g. \cite{AA},
\cite{AAPS}, \cite{ABCR}, \cite{R-1}, \cite{T}). These investigations showed,
in a number of cases, how an algebraic property of $L_{K}(E)$ and the
corresponding analytical property of $C^{\ast}(E)$ are both implied by the
same graphical property of $E$, though the techniques of proofs are often
different. The initial investigation of special types of ideals such as the
graded ideals, the corresponding quotient algebras and the prime ideals of a
Leavitt path algebra was essentially inspired by the analogous investigation
done for graph $C^{\ast}$-algebras. But an extensive investigation of the
ideal theory of Leavitt path algebras, as has been done for commutative rings,
is yet to happen.

This paper may be considered as a small step in exploring the multiplicative
ideal theory of Leavitt path algebras and was triggered by a question raised
by Professor Astrid an Huef. In the theory of $C^{\ast}$-algebras and, in
particular, that of the graph $C^{\ast}$-algebras, every (closed) ideal $I$ of
$C^{\ast}(E)$ is the intersection of all the primitive/prime ideals containing
$I$ (Theorem 2.9.7, \cite{D}). In a recent 2015 CIMPA research school in
Turkey on Leavitt path algebras and graph $C^{\ast}$-algebras, Professor an
Huef raised the question\ whether the preceding statement is true for ideals
of Leavitt path algebras. We first construct examples showing that this
property does not hold in general for Leavitt path algebras. We then prove
that, for a given graph $E$, every ideal of the Leavitt path algebra
$L_{K}(E)$ is an intersection of primitive/prime ideals if and only if the
graph $E$ satisfies Condition (K). A uniqueness theorem is proved in
representing an ideal of $L_{K}(E)$ as the irredundant intersection of
finitely many prime ideals. As a corollary, we show that every ideal of
$L_{K}(E)$ is a prime ideal if and only if (i) Condition (K) holds in $E$,
(ii) for each hereditary saturated subset $H$ of vertices, $|B_{H}|\leq1$ and
$E^{0}\backslash H$ is downward directed and (iii) the admissible pairs
$(H,S)$ (see definition below) form a chain under a defined partial order.
Equivalently, all the ideals of $L_{K}(E)$ are graded and form a chain under
set inclusion. Following this, Leavitt path algebras possessing finitely many
prime ideals are described. We also give conditions under which every ideal of
a Leavitt path algebra is an intersection of maximal ideals.

The graded ideals of a Leavitt path algebra possess many interesting
properties. Using these, we examine the uniqueness of factorizing a graded
ideal as a product of prime ideals. A perhaps interesting result is that if
$I$ is a graded ideal and $I=P_{1}\cdot\cdot\cdot P_{n}$ is a factorization of
$I$ as an irredundant product of prime ideals $P_{i}$, then necessarily all
the ideals $P_{i}$ must be graded ideals and moreover, $I=P_{1}\cap\ldots\cap
P_{n}$. We also prove a weaker version of this result for non-graded ideals.
Finally, powers of an ideal in $L_{K}(E)$ are studied. While $I^{2}=I$ for any
graded ideal $I$, it is shown that, for a non-graded ideal $I$ of $L_{K}(E)$,
its powers $I^{n}$ $(n\geq1)$ are all non-graded and distinct, but the
intersection of the powers ${\displaystyle\bigcap\limits_{n=1}^{\infty}}I^{n}$ 
is always a graded ideal and is indeed the largest graded ideal of
$L_{K}(E)$ contained in $I$. As a corollary, we obtain an analogue of Krull's
theorem (Theorem 12, section 7, \cite{ZS}) for Leavitt path algebras: For an
ideal $I$ of $L_{K}(E)$, the intersection 
${\displaystyle\bigcap\limits_{n=1}^{\infty}}I^{n}=0$ 
if and only if $I$ contains no vertices.

\textbf{Preliminaries}: For the general notation, terminology and results in
Leavitt path algebras, we refer to \cite{AA}, \cite{R-1} and \cite{T}. For
basic results in associative rings and modules, we refer to \cite{AF}. We give
below a short outline of some of the needed basic concepts and results.

A (directed) graph $E=(E^{0},E^{1},r,s)$ consists of two sets $E^{0}$ and
$E^{1}$ together with maps $r,s:E^{1}\rightarrow E^{0}$. The elements of
$E^{0}$ are called \textit{vertices} and the elements of $E^{1}$
\textit{edges}.

A vertex $v$ is called a \textit{sink} if it emits no edges and a vertex $v$
is called a \textit{regular} \textit{vertex} if it emits a non-empty finite
set of edges. An \textit{infinite emitter} is a vertex which emits infinitely
many edges. For each $e\in E^{1}$, we call $e^{\ast}$ a ghost edge. We let
$r(e^{\ast})$ denote $s(e)$, and we let $s(e^{\ast})$ denote $r(e)$. A\textit{
path} $\mu$ of length $|\mu|=n>0$ is a finite sequence of edges $\mu
=e_{1}e_{2}\cdot\cdot\cdot e_{n}$ with $r(e_{i})=s(e_{i+1})$ for all
$i=1,\cdot\cdot\cdot,n-1$. In this case $\mu^{\ast}=e_{n}^{\ast}\cdot
\cdot\cdot e_{2}^{\ast}e_{1}^{\ast}$ is the corresponding ghost path. A vertex
is considered a path of length $0$. The set of all vertices on the path $\mu$
is denoted by $\mu^{0}$.

A path $\mu$ $=e_{1}\dots e_{n}$ in $E$ is \textit{closed} if $r(e_{n}%
)=s(e_{1})$, in which case $\mu$ is said to be \textit{based at the vertex
}$s(e_{1})$. A closed path $\mu$ as above is called \textit{simple} provided
it does not pass through its base more than once, i.e., $s(e_{i})\neq
s(e_{1})$ for all $i=2,...,n$. The closed path $\mu$ is called a
\textit{cycle} if it does not pass through any of its vertices twice, that is,
if $s(e_{i})\neq s(e_{j})$ for every $i\neq j$.

An \textit{exit }for a path $\mu=e_{1}\dots e_{n}$ is an edge $e$ such that
$s(e)=s(e_{i})$ for some $i$ and $e\neq e_{i}$. We say the graph $E$ satisfies
\textit{Condition (L) }if every cycle in $E$ has an exit. The graph $E$ is
said to satisfy \textit{Condition (K)} if every vertex which is the base of a
closed path $c$ is also a base of another closed path $c^{\prime}$ different
from $c$.

If there is a path from vertex $u$ to a vertex $v$, we write $u\geq v$. A
subset $D$ of vertices is said to be \textit{downward directed }\ if for any
$u,v\in D$, there exists a $w\in D$ such that $u\geq w$ and $v\geq w$. A
subset $H$ of $E^{0}$ is called \textit{hereditary} if, whenever $v\in H$ and
$w\in E^{0}$ satisfy $v\geq w$, then $w\in H$. A hereditary set is
\textit{saturated} if, for any regular vertex $v$, $r(s^{-1}(v))\subseteq H$
implies $v\in H$.

Given an arbitrary graph $E$ and a field $K$, the \textit{Leavitt path algebra
}$L_{K}(E)$ is defined to be the $K$-algebra generated by a set $\{v:v\in
E^{0}\}$ of pair-wise orthogonal idempotents together with a set of variables
$\{e,e^{\ast}:e\in E^{1}\}$ which satisfy the following conditions:

\begin{enumerate}
\item[(1)] $s(e)e=e=er(e)$ for all $e\in E^{1}$.

\item[(2)] $r(e)e^{\ast}=e^{\ast}=e^{\ast}s(e)$\ for all $e\in E^{1}$.

\item[(3)] (The "CK-1 relations") For all $e,f\in E^{1}$, $e^{\ast}e=r(e)$ and
$e^{\ast}f=0$ if $e\neq f$.

\item[(4)] (The "CK-2 relations") For every regular vertex $v\in E^{0}$,
\[
v=\sum_{e\in E^{1},s(e)=v}ee^{\ast}.
\]

\end{enumerate}

Every Leavitt path algebra $L_{K}(E)$ is a $\mathbb{Z}$-graded algebra 
$L_{K}(E)={\displaystyle\bigoplus\limits_{n\in\mathbb{Z}}}L_{n}$ 
induced by defining, for all $v\in E^{0}$ and $e\in E^{1}$, $\deg
(v)=0$, $\deg(e)=1$, $\deg(e^{\ast})=-1$. Further, for each 
$n\in \mathbb{Z}$, the homogeneous component $L_{n}$ is given by
\[L_{n}=\left\{{\textstyle\sum}
k_{i}\alpha_{i}\beta_{i}^{\ast}\in L:\text{ }|\alpha_{i}|-|\beta
_{i}|=n\right\}  .
\]
An ideal $I$ of $L_{K}(E)$ is said to be a graded ideal if $I=$ 
${\displaystyle\bigoplus\limits_{n\in\mathbb{Z}}}(I\cap L_{n})$.

We shall be using the following concepts and results from \cite{T}. A
\textit{breaking vertex }of a hereditary saturated subset $H$ is an infinite
emitter $w\in E^{0}\backslash H$ with the property that $0<|s^{-1}(w)\cap
r^{-1}(E^{0}\backslash H)|<\infty$. The set of all breaking vertices of $H$ is
denoted by $B_{H}$. For any $v\in B_{H}$, $v^{H}$ denotes the element
$v-\sum_{s(e)=v,r(e)\notin H}ee^{\ast}$. Given a hereditary saturated subset
$H$ and a subset $S\subseteq B_{H}$, $(H,S)$ is called an \textit{admissible
pair.} The set $\mathbf{H}$ of all admissible pairs becomes a lattice under a
partial order $\leq^{\prime}$ under which $(H_{1},S_{1})\leq^{\prime}
(H_{2},S_{2})$ if $H_{1}\subseteq H_{2}$ and $S_{1}\subseteq H_{2}\cup S_{2}$.
Given an admissible pair $(H,S)$, the ideal generated by $H\cup\{v^{H}:v\in
S\}$ is denoted by $I(H,S)$. It was shown in \cite{T} that the graded ideals
of $L_{K}(E)$ are precisely the ideals of the form $I(H,S)$ for some
admissible pair $(H,S)$. Moreover, $L_{K}(E)/I(H,S)\cong L_{K}(E\backslash
(H,S))$. Here $E\backslash(H,S)$ is the \textit{Quotient graph of }$E$ in
which\textit{ }$(E\backslash(H,S))^{0}=(E^{0}\backslash H)\cup\{v^{\prime
}:v\in B_{H}\backslash S\}$ and $(E\backslash(H,S))^{1}=\{e\in E^{1}
:r(e)\notin H\}\cup\{e^{\prime}:e\in E^{1},r(e)\in B_{H}\backslash S\}$ and
$r,s$ are extended to $(E\backslash(H,S))^{0}$ by setting $s(e^{\prime})=s(e)$
and $r(e^{\prime})=r(e)^{\prime}$. For a description of non-graded ideals of
$L_{K}(E)$, see \cite{R-2}.

A useful observation is that every element $a$ of $L_{K}(E)$ can be written as
$a={\textstyle\sum\limits_{i=1}^{n}}k_{i}\alpha_{i}\beta_{i}^{\ast}$, 
where $k_{i}\in K$, $\alpha_{i},\beta_{i}$
are paths in $E$ and $n$ is a suitable integer. Moreover, 
$L_{K}(E)={\textstyle\bigoplus\limits_{v\in E^{0}}}
L_{K}(E)v={\textstyle\bigoplus\limits_{v\in E^{0}}}vL_{K}(E).$ 
Further, the Jacobson radical of $L_{K}(E)$ is always zero (see
\cite{AA}). Another useful fact is that if $p^{\ast}q\neq0$, where $p,q$ are
paths, then either $p=qr$ or $q=ps$ where $r,s$ are suitable paths in $E$.

Let $\Lambda$ be an arbitrary infinite index set. For any ring $R$, we denote
by $M_{\Lambda}(R)$ the ring of matrices over $R$ with identity whose entries
are indexed by $\Lambda\times\Lambda$ and whose entries, except for possibly a
finite number, are all zero. It follows from the works in \cite{A}, \cite{AM}
that $M_{\Lambda}(R)$ are Morita equivalent to $R$.

Throughout this paper, $E$ will denote an arbitrary graph (with no restriction
on the number of vertices and the number of edges emitted by each vertex) and
$K$ will denote an arbitrary field. For convenience in notation, we will
denote, most of the times, the Leavitt path algebra $L_{K}(E)$ by $L$.

\section{Intersections of prime ideals}

In this section, we give necessary and sufficient conditions under which every
ideal of a Leavitt path algebra $L$ of an arbitrary graph $E$ is the
intersection of prime/primitive ideals. As applications, conditions on the
graph $E$ are obtained under which (a) every ideal of $L$ is a prime ideal and
(b) when $L$ contains only a finite number of prime ideals. A uniqueness
theorem for irredundant intersections of prime ideals is also obtained. We
also obtain conditions under which every ideal of $L$ is an intersection of
maximal ideals.

Remark: In this and the next section, by an ideal $I$ we mean an ideal $I$ of
$L$ such that $I\neq L$.

\begin{lemma}
\label{Graded => Primitive intersection} \textit{Let }$I$\textit{ be a graded
ideal a Leavitt path algebra }$L$ of an arbitrary graph $E$\textit{. Then }%
$I$\textit{ is the intersection of all primitive (and hence prime) ideals
containing }$I$\textit{.}
\end{lemma}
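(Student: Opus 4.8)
The plan is to reduce the statement to the analogous fact about the quotient graph $E \setminus (H,S)$, where $(H,S)$ is the admissible pair with $I = I(H,S)$ (such a pair exists since $I$ is graded, by the result of Tomforde quoted in the Preliminaries). Since $L/I \cong L_K(E \setminus (H,S))$, and since the primitive (resp.\ prime) ideals of $L$ containing $I$ correspond bijectively and inclusion-preservingly to the primitive (resp.\ prime) ideals of $L/I$, the claim ``$I$ is the intersection of the primitive ideals containing it'' is equivalent to the claim that the zero ideal of $L_K(E \setminus (H,S))$ is the intersection of all primitive ideals of $L_K(E \setminus (H,S))$, i.e.\ that $L_K(E \setminus (H,S))$ has zero Jacobson radical. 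But the Preliminaries already record that the Jacobson radical of any Leavitt path algebra is zero (citing \cite{AA}). Hence $\bigcap \{ \mathfrak{p} : \mathfrak{p} \text{ primitive in } L/I\} = 0$, and pulling back gives $\bigcap \{ P : P \text{ primitive in } L,\ P \supseteq I \} = I$.

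First I would state explicitly the order-isomorphism between the lattice of ideals of $L$ containing $I$ and the lattice of ideals of $L/I$, and note that it sends primitive ideals to primitive ideals and prime ideals to prime ideals (a primitive ideal is the annihilator of a simple module, and simple $L$-modules killed by $I$ are exactly the simple $L/I$-modules; primeness is visibly preserved by quotienting). Then I would invoke $L/I \cong L_K(E\setminus(H,S))$ together with $J(L_K(F)) = 0$ for every graph $F$ to conclude that the intersection of all primitive ideals of $L/I$ is $0$. Translating back through the isomorphism yields that the intersection of all primitive ideals of $L$ containing $I$ is $I$ itself. Since every primitive ideal is prime, the same intersection taken over prime ideals containing $I$ lies between $I$ and that value, hence also equals $I$; this gives the parenthetical ``and hence prime'' assertion.

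The only mild subtlety, and the place I would be most careful, is the claim that primitivity is preserved under the correspondence $P \mapsto P/I$ in \emph{both} directions for non-unital (but idempotent, locally unital) rings such as $L$. For the direction we actually need — that the intersection over primitive ideals of $L$ containing $I$ is small — it suffices that every primitive ideal of $L/I$ pulls back to a primitive ideal of $L$, which is immediate since a simple $L/I$-module is a simple $L$-module with $I$ in its annihilator. That is all the proof requires, so no delicate converse is needed. Everything else is formal, and the substantive input — vanishing of the Jacobson radical of a Leavitt path algebra — is quoted from the literature in the Preliminaries.
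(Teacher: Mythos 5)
Your proposal is correct and follows essentially the same route as the paper: identify $I = I(H,S)$, use $L/I \cong L_K(E\backslash(H,S))$ together with the vanishing of the Jacobson radical of a Leavitt path algebra, and pull the zero intersection of primitive ideals back to $L$, with the prime case following since primitive ideals are prime. Your extra care about the primitive-ideal correspondence in the locally unital setting is a sound (if implicit in the paper) refinement, not a deviation.
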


\begin{proof}
Let $H=I\cap E^{0}$ and $S=\{v\in B_{H}:v^{H}\in I\}$. By \cite{T}, the graded
ideal $I=I(H,S)$, the ideal generated by $H\cup\{v^{H}:v\in S\}$. Also,
$L/I\cong L_{K}(E\backslash(H,S))$. Since the Jacobson radical of
$L_{K}(E\backslash(H,S))$ is zero, we conclude that the intersection of all
primitive ideals of $L_{K}(E\backslash(H,S))$ is $0$. This means that $I$ is
the intersection of all the primitive ideals of $L$ containing $I$. Moreover,
since every primitive ideal is prime, we conclude that $I$ is the intersection
of all prime ideals containing $I$.
\end{proof}

The next example shows that a non-graded ideal of a Leavitt path algebra need
not be an intersection of all the prime/primitive ideals containing it.

\begin{example}
\label{Laurent => No Prime Intersection} \textit{Let }$E$ be a graph with one
vertex $v$ and a loop $c$ so $s(c)=v=r(c)$. Thus $E$ is the graph
\[ \xymatrix{ {\bullet}_v \ar@(ur,ul)_c }\] 
\textit{Consider the ideal }$B=\left\langle p(c)\right\rangle $ of $L_{K}(E)$
where $p(x)$ is \textit{an irreducible polynomial in }$K[x,x^{-1}]$. We claim
that $B^{2}$ is not an intersection of prime ideals in $L_{K}(E)$. To see
this, first observe that $L_{K}(E)\overset{\theta}{\cong}R=K[x,x^{-1}]$ under
the map $\theta$ mapping $v\mapsto1,c\mapsto x$ and $c^{\ast}\mapsto x^{-1}$.
\textit{Then }$B\cong A=<p(x)>$\textit{, the ideal generated by }$p(x)$
in\textit{ }$R$. So it is enough if we show that\textit{ }$N=A^{2}$\textit{
cannot be an intersection of prime ideals of }$R$\textit{. }Suppose, on the
contrary, $N={\displaystyle\bigcap\limits_{\lambda\in\Lambda}}
M_{\lambda}$ where $\Lambda$ is an arbitrary index set and each $M_{\lambda}$
is a (non-zero) prime ideal of $R$. Note that each $M_{\lambda}$ is a maximal
ideal of $R$, as $R$ is a principal ideal domain. Then there is a homomorphism
$\phi:R\rightarrow{\displaystyle\prod\limits_{\lambda\in\Lambda}}
R/M_{\lambda}$ given by $r\longmapsto(\cdot\cdot\cdot,r+M_{\lambda},\cdot
\cdot\cdot)$ with $\ker(\phi)=N$. Now $\bar{A}=\phi(A)\cong A/N\neq0$
satisfies $(\bar{A})^{2}=0$ and this is impossible since 
${\displaystyle\prod\limits_{\lambda\in\Lambda}}R/M_{\lambda}$, 
being a direct product of fields, does not contain any
non-zero nilpotent elements.
\end{example}

We wish to point out that, unlike the case of graded ideals, there are
non-graded ideals in a Leavitt path algebra some of which (such as the ideal
$N$ in the above example) are not intersections of prime/primitive ideals and
there are also some non-graded ideals that are intersections of prime ideals.
To obtain an example of the latter, different from the above, consider the
Toeplitz algebra $L_{K}(E)$ where $E$ is a graph with two vertices $v,w$, a
loop $c$ with $s(c)=v=r(c)$ and an edge $f$ with $s(f)=v$ and $r(f)=w$. Thus
$E$ is the graph
\[  \xymatrix{ {\bullet}_v \ar@(ur,ul)_c \ar@{->}[r]^f & {\bullet}_w }   \]
Then the ideals $A=\left\langle w,v+c\right\rangle $, $B=\left\langle
w,v+c^{2}\right\rangle $ and $I=\left\langle w,(v+c)(v+c^{2})\right\rangle $
are all non-graded ideals of $L_{K}(E)$ (see Proposition 6, \cite{R-2}). Now
$L_{K}(E)/\left\langle w\right\rangle \cong K[x,x^{-1}]$ under the map
$v+\left\langle w\right\rangle \longmapsto1,c+\left\langle w\right\rangle
\longmapsto x$ and $c^{\ast}+\left\langle w\right\rangle \longmapsto x^{-1}$.
Since $1+x$ and $1+x^{2}$ are irreducible polynomials in the principal ideal
domain $K[x,x^{-1}]$, $\left\langle 1+x\right\rangle \cap\left\langle
1+x^{2}\right\rangle =\left\langle (1+x)(1+x^{2})\right\rangle $ and moreover
the ideals $\left\langle 1+x\right\rangle $ and $\left\langle 1+x^{2}%
\right\rangle $ are maximal ideals. Thus $A/\left\langle w\right\rangle $ and
$B/\left\langle w\right\rangle $ are maximal ideals whose intersection is
$I/\left\langle w\right\rangle $. Hence the non-graded ideal $I=A\cap B$ is an
intersection of two primitive/prime ideals of $L_{K}(E)$.

We next explore conditions under which every ideal in a Leavitt path algebra
is an intersection of prime/primitive ideals.

To begin with, we need a result on lattice isomorphisms. In general, a lattice
isomorphism between two lattices need not preserve infinite infemums. But for
complete lattices, the infemum is preserved. This assertion is perhaps
folklore and we need it in the proofs of couple of statements below. Since we
could not find this statement explicitly stated or proved in our literature
search, we record it in the next Lemma and outline its easy proof.

\begin{lemma}
\label{Lattice Iso} \textit{Let }$f:(\mathbf{L,\leq)}\longrightarrow
(\mathbf{L}^{\prime},\leq)$\textit{ be an isomorphism of two complete
lattices. Let }$Y$ be \textit{any finite or infinite index set and let }
$P_{i}\in\mathbf{L}$ for each $i\in Y$. \textit{Then }
$f({\displaystyle\bigwedge\limits_{i\in Y}}P_{i})=
{\displaystyle\bigwedge\limits_{i\in Y}}f(P_{i})$\textit{.}
\end{lemma}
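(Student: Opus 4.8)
The plan is to exploit the standard fact that in a complete lattice the infimum of a family $\{P_i\}_{i\in Y}$ is characterized purely order-theoretically as its greatest lower bound, and then transport this characterization across the isomorphism $f$. First I would recall that since $(\mathbf{L},\leq)$ is complete, $P=\bigwedge_{i\in Y}P_i$ exists; likewise $Q=\bigwedge_{i\in Y}f(P_i)$ exists in $\mathbf{L}'$. The goal is then to show $f(P)=Q$.

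The key steps, in order: (1) Observe that $f$ is an order isomorphism, i.e. $a\leq b$ in $\mathbf{L}$ if and only if $f(a)\leq f(b)$ in $\mathbf{L}'$; this is immediate since a lattice isomorphism and its inverse both preserve meets (equivalently joins), hence preserve the order. (2) Show $f(P)$ is a lower bound for $\{f(P_i)\}$: since $P\leq P_i$ for all $i$, applying $f$ and using monotonicity gives $f(P)\leq f(P_i)$ for all $i$; hence $f(P)\leq Q$. (3) Show $f(P)$ is the \emph{greatest} lower bound: let $c\in\mathbf{L}'$ be any lower bound of $\{f(P_i)\}$, so $c\leq f(P_i)$ for all $i$. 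Write $c=f(d)$ with $d=f^{-1}(c)$; by the order isomorphism property, $d\leq P_i$ for all $i$, so $d\leq P$ since $P$ is the infimum, and applying $f$ gives $c=f(d)\leq f(P)$. In particular $Q\leq f(P)$. Combining (2) and (3) yields $f(P)=Q$, which is the claim.

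I do not expect any genuine obstacle here — the statement is essentially folklore, and the only thing one must be slightly careful about is invoking completeness precisely where it is needed, namely to guarantee that both infima $\bigwedge_{i\in Y}P_i$ and $\bigwedge_{i\in Y}f(P_i)$ actually exist when $Y$ is infinite (for finite $Y$ the result holds in any lattice and needs no completeness). The rest is a routine two-inequality argument using only that $f$ and $f^{-1}$ are monotone bijections. Accordingly I would keep the write-up to a few lines, matching the "outline its easy proof" promise in the text preceding the lemma.
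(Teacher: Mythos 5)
Your proof is correct and follows essentially the same route as the paper's: show $f(P)$ is a lower bound of $\{f(P_i)\}$, then pull back an arbitrary lower bound via $f^{-1}$ and use that $P$ is the greatest lower bound in $\mathbf{L}$ to conclude $f(P)$ is greatest in $\mathbf{L}'$. The only cosmetic difference is that you spell out the order-isomorphism step and the role of completeness a bit more explicitly than the paper does.
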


\begin{proof}
Let $A={\displaystyle\bigwedge\limits_{i\in Y}}P_{i}$. Clearly 
$f(A)\leq{\displaystyle\bigwedge\limits_{i\in Y}}f(P_{i})$. 
Suppose $B\leq f(P_{i})$ for all $i\in Y$. Then $f^{-1}(B)\leq
f^{-1}f(P_{i})=P_{i}$ for all $i$. Hence $f^{-1}(B)\leq
{\displaystyle\bigwedge\limits_{i\in Y}}P_{i}=A$. Then $B=f(f^{-1}(B))\leq f(A)$. 
Consequently, $f(A)={\displaystyle\bigwedge\limits_{i\in Y}}f(P_{i})$.
\end{proof}

\begin{lemma}
\label{No L => No prime intersection} Suppose $E$ is an arbitrary graph which
does not satisfy Condition (L). Then there is an ideal $I$ of the
corresponding Leavitt path algebra $L:=L_{K}(E)$ which is not an intersection
of prime/primitive ideals of $L$.
\end{lemma}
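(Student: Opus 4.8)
The plan is to exploit the failure of Condition (L): if $E$ does not satisfy Condition (L), then $E$ contains a cycle $c$, based at some vertex $v$, which has no exit. The local structure around such a cycle is well understood — the corner $vL_{K}(E)v$ (or more precisely the two-sided ideal generated by $v$ together with the vertices downstream of $c$) behaves, modulo the hereditary saturated closure of the "rest" of the graph, like a Laurent polynomial ring $K[x,x^{-1}]$. This is the same mechanism that makes Example~\ref{Laurent => No Prime Intersection} work, and the idea is to reduce the general no-exit-cycle situation to that example by passing to an appropriate quotient.

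Concretely, first I would let $H$ be the hereditary saturated subset generated by $\{r(e) : s(e) \in c^0,\ e \notin c^1\}$ — that is, by all vertices reached by edges leaving the cycle other than along the cycle itself — together with enough breaking-vertex data to form an admissible pair $(H,S)$ so that in the quotient graph $E \backslash (H,S)$ the image of the cycle $c$ becomes a no-exit cycle whose vertices emit no other edges. (One must check $v \notin H$; this is where the no-exit hypothesis is used — if $c$ had an exit, $v$ might be forced into the saturation.) By the results of \cite{T} quoted in the Preliminaries, $L_{K}(E)/I(H,S) \cong L_{K}(E\backslash(H,S))$, and the subalgebra/ideal corresponding to the connected component of the cycle is isomorphic (up to Morita equivalence via an $M_\Lambda(-)$, since there may be many vertices feeding into the cycle) to $K[x,x^{-1}]$, exactly as in Example~\ref{Laurent => No Prime Intersection}. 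Then, taking $p(x)$ an irreducible polynomial in $K[x,x^{-1}]$ and $N$ the square of the ideal it generates, the pullback of $N$ along the composite $L_{K}(E) \twoheadrightarrow L_{K}(E\backslash(H,S)) \to K[x,x^{-1}]$ gives the desired ideal $I$ of $L$. Because taking preimages along a surjective ring homomorphism carries prime ideals to prime ideals and commutes with arbitrary intersections, and because $N$ is not an intersection of primes in $K[x,x^{-1}]$ (by the nilpotent-element argument in the Example), the ideal $I$ cannot be an intersection of prime ideals of $L$ either.

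The main obstacle I anticipate is bookkeeping the quotient construction so that the no-exit cycle really does become an isolated loop — or at worst a loop on a line — in the quotient graph, and identifying the relevant piece of $L_{K}(E\backslash(H,S))$ with (a matrix ring over) $K[x,x^{-1}]$ cleanly. There are two subtleties: (i) the cycle may have length greater than one, so one gets $K[x,x^{-1}]$ only after recognizing that a no-exit cycle of length $n$ still yields a Leavitt path algebra Morita equivalent to $K[x,x^{-1}]$; and (ii) infinitely many vertices (or trees of vertices) may point into the cycle, which is precisely why the $M_\Lambda(-)$ / Morita-equivalence language of the Preliminaries is needed, and one must be careful that the relevant ideals still correspond under Morita equivalence (which they do, as Morita equivalence induces a lattice isomorphism of ideal lattices, and by Lemma~\ref{Lattice Iso} this isomorphism preserves arbitrary intersections). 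Once these identifications are in place, the contradiction is immediate from the Example, so the heart of the argument is really the graph-theoretic reduction rather than any new algebra.
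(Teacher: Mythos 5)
There is a genuine gap, and it sits exactly where your argument leans on a quotient map that does not exist. First, a small sign of confusion: the hereditary set you build from $\{r(e):s(e)\in c^{0},\ e\notin c^{1}\}$ is \emph{empty}, because the failure of Condition (L) hands you a cycle $c$ with \emph{no} exits at all -- there are no edges leaving $c$ to saturate away, so your quotient step does nothing. More seriously, the second arrow in your composite $L_{K}(E)\twoheadrightarrow L_{K}(E\backslash(H,S))\to K[x,x^{-1}]$ is not a ring homomorphism. The no-exit cycle does not give rise to a quotient of $L$ isomorphic to (a matrix ring over) $K[x,x^{-1}]$; it gives rise to an \emph{ideal}, namely the ideal $A$ generated by the vertices on $c$, which by \cite{AAPS} is isomorphic to $M_{\Lambda}(K[x,x^{-1}])$. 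You cannot isolate the cycle by passing to a quotient graph: any hereditary saturated set containing a vertex that flows \emph{into} $c$ must contain $c^{0}$ itself, so upstream structure (for instance an infinite emitter feeding the cycle) survives in every admissible quotient, and no surjection of $L$ onto $K[x,x^{-1}]$ or onto $M_{\Lambda}(K[x,x^{-1}])$ is available in general. Consequently ``pull back $N$ along a surjective homomorphism and use that preimages of primes are primes'' has nothing to apply to.

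What is missing is the mechanism the paper uses to transfer the counterexample from $A$ up to $L$. One moves $N$ from $K[x,x^{-1}]$ to an ideal $\overline{N}$ of $M_{\Lambda}(K[x,x^{-1}])$ via the Morita lattice isomorphism (which preserves primes, and preserves arbitrary intersections by Lemma \ref{Lattice Iso} -- this part of your plan is fine), hence to an ideal $I$ of $A$. Then one uses that $A\cong M_{\Lambda}(K[x,x^{-1}])$ is a ring with local units: this gives both that every ideal of $A$ is already an ideal of $L$, and that $P\cap A$ is a prime ideal of $A$ for every prime ideal $P$ of $L$ (not containing $A$). So if $I$ were an intersection of primes of $L$, intersecting that representation with $A$ would exhibit $I$ as an intersection of primes of $A$, contradicting the choice of $N$. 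Without this local-units step (or some substitute for it), your argument does not get the bad ideal from the corner piece into $L$; with it, the quotient-graph detour is unnecessary.
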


\begin{proof}
Since Condition (L) does not hold, there is a cycle $c$ based at a vertex $v$
having no exits in $E$. Now the ideal $A$ generated by the vertices on $c$ is
isomorphic to $M_{\Lambda}(K[x,x^{-1}])$ where $\Lambda$ is an index set
representing the set of all paths that end at $c$ but do not include all the
edges $e$ (see \cite{AAPS}). Also $M_{\Lambda}(K[x,x^{-1}])$ is Morita
equivalent to $K[x,x^{-1}]$ (see \cite{A}, \cite{AM}) and so its lattice of
ideals is isomorphic to the lattice of ideals of $K[x,x^{-1}]$ and that prime
(primitive) ideals correspond to prime (primitive) ideals under this
isomorphism (see \cite{AF}). \ Then, in view of Lemma \ref{Lattice Iso}, the
ideal $\overline{N}$ of $M_{\Lambda}(K[x,x^{-1}])$ that corresponds to the
ideal $N$ of $K[x,x^{-1}]$ constructed in Example
\ref{Laurent => No Prime Intersection} is not an intersection of prime ideals
and hence not an intersection of primitive ideals containing it. Let $I$
denote the ideal of $A$ that corresponds to the ideal $\overline{N}$ under the
isomorphism $A\longrightarrow M_{\Lambda}(K[x,x^{-1}])$. Now $M_{\Lambda
}(K[x,x^{-1}])$ \ and hence $A$ is a ring with local units and so every ideal
of $A$ is also an ideal of $L$. The existence of local units in $A$ also
implies that if $P$ is a prime ideal of $L$, then $P\cap A$ is a prime ideal
of $A$. Consequently, $I$ will be an ideal of $L$ which is not an intersection
of prime ideals of $L$. This implies that $I$ is also not an intersection of
primitive ideals.
\end{proof}

The next theorem describes conditions under which every ideal of a Leavitt
path algebra is an intersection of prime ideals.

\begin{theorem}
\label{Prime Intersection <=> Condition K} Let $E$ be an arbitrary graph. Then
the following\ properties are equivalent for $L:=L_{K}(E)$:
\end{theorem}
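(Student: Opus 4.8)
The stated conditions are: $(a)$ every ideal of $L$ is an intersection of primitive ideals of $L$; $(b)$ every ideal of $L$ is an intersection of prime ideals of $L$; $(c)$ $E$ satisfies Condition (K). The plan is to establish the cycle $(c)\Rightarrow(a)\Rightarrow(b)\Rightarrow(c)$. The implication $(a)\Rightarrow(b)$ needs nothing, since every primitive ideal is prime and so any intersection of primitive ideals is an intersection of prime ideals.

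For $(c)\Rightarrow(a)$, the essential input is the known equivalence that $E$ satisfies Condition (K) if and only if every ideal of $L$ is graded (see \cite{T}). Assuming $(c)$, every ideal of $L$ is then graded, and Lemma~\ref{Graded => Primitive intersection} shows that each graded ideal is the intersection of all primitive (hence prime) ideals containing it; this gives $(a)$.

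The heart of the matter is $(b)\Rightarrow(c)$, which I would prove by contraposition. Suppose $E$ does not satisfy Condition (K). If $E$ fails Condition (L), then Lemma~\ref{No L => No prime intersection} already produces an ideal of $L$ that is not an intersection of prime/primitive ideals, and we are done. So assume $E$ satisfies Condition (L) but not Condition (K). I would then invoke the structural characterization of Condition (K): its failure is witnessed in a quotient, i.e. there is an admissible pair $(H,S)$ such that the quotient graph $F:=E\setminus(H,S)$ does not satisfy Condition (L), while $L/I(H,S)\cong L_K(F)$ by \cite{T}. Applying Lemma~\ref{No L => No prime intersection} to $F$, there is an ideal $\bar J$ of $L_K(F)$ that is not an intersection of prime ideals of $L_K(F)$.

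It remains to transfer this back to $L$. Let $J$ be the inverse image of $\bar J$ under $L\twoheadrightarrow L/I(H,S)\cong L_K(F)$, so $J\supseteq I(H,S)$. The correspondence theorem identifies the complete lattice of ideals of $L$ containing $I(H,S)$ with the lattice of ideals of $L_K(F)$, prime ideals corresponding to prime ideals; by Lemma~\ref{Lattice Iso} this isomorphism preserves arbitrary intersections. Since an ideal is an intersection of prime ideals exactly when it equals the intersection of all prime ideals containing it, and $\bar J$ fails this in $L_K(F)$, the ideal $J$ fails it in $L$; hence $J$ is not an intersection of prime ideals of $L$, and therefore not an intersection of primitive ideals of $L$ either. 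This proves $(b)\Rightarrow(c)$ and closes the cycle. The main obstacle I anticipate is the second case above: one must carefully justify that failure of Condition (K), with Condition (L) still in force, really does surface as a failure of Condition (L) in some quotient graph $E\setminus(H,S)$, and that the relevant ideal-lattice correspondence is genuinely a complete-lattice isomorphism so that Lemma~\ref{Lattice Iso} is applicable.
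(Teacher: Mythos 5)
Your proposal is correct and takes essentially the same route as the paper: (iii)$\Rightarrow$(i) via ``Condition (K) implies every ideal is graded'' together with Lemma~\ref{Graded => Primitive intersection}, and (ii)$\Rightarrow$(iii) by contraposition using the fact that failure of Condition (K) yields an admissible pair $(H,S)$ with $E\backslash(H,S)$ failing Condition (L) (Proposition 6.12 of \cite{T}), then Lemma~\ref{No L => No prime intersection} in the quotient and the ideal correspondence under $L/I(H,S)\cong L_{K}(E\backslash(H,S))$ to pull the bad ideal back to $L$. Your initial case split (E fails (L) versus E satisfies (L) but not (K)) is harmless but unnecessary, since the first case is the quotient case with $(H,S)=(\emptyset,\emptyset)$, and your extra care with the complete-lattice correspondence only makes explicit what the paper asserts tersely.
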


\begin{enumerate}
\item[(i)] \textit{Every ideal }$I$\textit{ of }$L$\textit{ is the
intersection of all the primitive ideals containing }$I$\textit{;}

\item[(ii)] \textit{Every ideal }$I$\textit{ of }$L$\textit{ is the
intersection of all the prime ideals containing }$I$\textit{;}

\item[(iii)] \textit{The graph }$E$\textit{ satisfies Condition (K).}
\end{enumerate}

\begin{proof}
Now (i)$\Rightarrow$(ii), since every primitive ideal is also a prime ideal.

Assume (ii). Suppose, on the contrary, $E$ does not satisfy Condition (K).
Then, by (Proposition 6.12, \cite{T}), there is an admissible pair $(H,S)$
where $H$ is a hereditary saturated subset of vertices and $S\subseteq B_{H}$
such that the quotient graph $E\backslash(H,S)$ does not satisfy Condition
(L). By Lemma \ref{No L => No prime intersection}, there is an ideal $\bar{I}$
in $L_{K}(E\backslash(H,S))$ which is not an intersection of all prime ideals
containing $\bar{I}$ in $L_{K}(E\backslash(H,S))$. Now $L/I(H,S)\cong
L_{K}(E\backslash(H,S))$ and so the ideal $A/I(H,S)$ that corresponds to
$\bar{I}$ under the preceding isomorphism is not an intersection of prime
ideals in $L/I(H,S)$. This implies that the ideal $A$ is not an intersection
of all the prime ideals containing $A$ in $L$. This contradiction shows that
$E$ must satisfy Condition (K), thus proving (iii).

Assume (iii) so that $E$ satisfies Condition (K). Then, by (Theorem 6.16,
\cite{T}), every ideal of $L$ is graded. By Lemma
\ref{Graded => Primitive intersection}, every ideal of $L$ is then an
intersection of primitive ideals. This proves (i).
\end{proof}

We next prove the uniqueness of representing an ideal of $L$ as an irredundant
intersection of finitely many prime ideals. We use the known ideas of proving
such statements. Recall that an intersection $P_{1}\cap\cdot\cdot\cdot\cap
P_{m}$ of ideals is \textit{irredundant} if no $P_{i}$ contains the
intersection of the other $m-1$ ideals $P_{j}$, $j\neq i$.

\begin{proposition}
\label{Uniqueness} \textit{Suppose }$A=P_{1}\cap\ldots\cap P_{m}=Q_{1}
\cap\ldots\cap Q_{n}$\textit{ are two representations of an ideal }$A$\textit{
of }$L$\textit{ as irredundant intersections of finitely many prime ideals
}$P_{i}$\textit{ and }$Q_{j}$\textit{ of }$L$\textit{. Then }$m=n$\textit{ and
}$\{P_{1},\ldots,P_{m}\}=\{Q_{1},\ldots,Q_{m}\}$\textit{.}
\end{proposition}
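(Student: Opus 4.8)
The plan is to use the standard ring-theoretic argument for uniqueness of irredundant prime decompositions. The only inputs needed are that a product of two-sided ideals is contained in their intersection, and that a (proper) ideal $P$ is prime precisely when $IJ\subseteq P$ forces $I\subseteq P$ or $J\subseteq P$ for two-sided ideals $I,J$; both are general facts about rings and require nothing special about $L$.

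First I would fix $i\in\{1,\dots,m\}$ and observe that $Q_{1}Q_{2}\cdots Q_{n}\subseteq Q_{1}\cap\cdots\cap Q_{n}=A\subseteq P_{i}$. Writing this product as $Q_{1}\cdot(Q_{2}\cdots Q_{n})$ and applying the primeness of $P_{i}$ repeatedly (an easy induction on $n$), I obtain an index $j$ with $Q_{j}\subseteq P_{i}$. Running the same argument with the roles of the two decompositions reversed, applied to $Q_{j}$ in place of $P_{i}$ and to the decomposition $A=P_{1}\cap\cdots\cap P_{m}$, yields an index $k$ with $P_{k}\subseteq Q_{j}$. Hence $P_{k}\subseteq Q_{j}\subseteq P_{i}$.

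Next I would invoke irredundancy of the first decomposition to force $k=i$. Indeed, if $k\neq i$, then $P_{k}$ is one of the ideals appearing in $\bigcap_{\ell\neq i}P_{\ell}$, so $\bigcap_{\ell\neq i}P_{\ell}\subseteq P_{k}\subseteq P_{i}$, contradicting the assumption that $P_{i}$ does not contain the intersection of the remaining $P_{\ell}$. Therefore $k=i$, and the chain $P_{i}=P_{k}\subseteq Q_{j}\subseteq P_{i}$ collapses to $P_{i}=Q_{j}$. Thus every $P_{i}$ equals some $Q_{j}$, and by the symmetric argument every $Q_{j}$ equals some $P_{i}$, so $\{P_{1},\dots,P_{m}\}=\{Q_{1},\dots,Q_{n}\}$ as sets.

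Finally, to conclude $m=n$ I would note that irredundancy also forces the $P_{i}$ to be pairwise distinct: if $P_{i}=P_{i'}$ with $i\neq i'$, then $\bigcap_{\ell\neq i}P_{\ell}\subseteq P_{i'}=P_{i}$, again a contradiction; likewise the $Q_{j}$ are pairwise distinct. Since the two sets coincide and each is listed without repetition, $m=n$. I do not expect a genuine obstacle here; the only points requiring care are that primeness is used in its two-sided-ideal form and iterated correctly over the finite product, and that the finiteness of $m$ and $n$ is genuinely needed for the passage from ``product contained in $P_i$'' to ``some factor contained in $P_i$.''
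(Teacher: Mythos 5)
Your proposal is correct and follows essentially the same route as the paper: use the containment of the product $Q_{1}\cdots Q_{n}$ in $A\subseteq P_{i}$ plus primeness to extract some $Q_{j}\subseteq P_{i}$, reverse the roles to get some $P_{k}\subseteq Q_{j}\subseteq P_{i}$, and let irredundancy force $P_{i}=Q_{j}$, concluding equality of the two sets and $m=n$. The only cosmetic difference is that you make the pairwise distinctness of the $P_{i}$ (and $Q_{j}$) explicit in the final counting step, whereas the paper runs the argument sequentially over $P_{1},P_{2},\ldots$ and tracks that the matched $Q_{j_{r}}$ are distinct; both are the standard argument and equally valid.
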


\begin{proof}
Now the product $Q_{1}Q_{2}\cdot\cdot\cdot Q_{n}\subseteq A\subseteq P_{1}$
and $P_{1}$ is a prime ideal. Hence $Q_{j_{1}}\subseteq P_{1}$ for some
$j_{1}\in\{1,\ldots,n\}$. Similarly, the product $P_{1}\cdot\cdot\cdot
P_{m}\subseteq A\subseteq Q_{j_{1}}$ and since $Q_{j_{1}}$ is prime,
$P_{i}\subseteq Q_{j_{1}}$ for some $i\in\{1,\ldots,m\}$. Thus $P_{i}\subseteq
P_{1}$ and, by irredundancy, $i=1$. Hence $P_{1}=Q_{j_{1}}$. Starting with
$P_{2}$, using similar arguments, we obtain $P_{2}=Q_{j_{2}}$ for some
$Q_{j_{2}}$. Now $Q_{j_{2}}\neq Q_{j_{1}}$, since otherwise $P_{1}=P_{2}$
which is not possible by irredundancy. Proceeding like this we reach the
conclusion that $\{P_{1},\ldots,P_{m}\}\subseteq\{Q_{1},\ldots,Q_{n}\}$.
Reversing the role of $P_{i}$ and $Q_{j}$, starting with the $Q_{j}$ and
proceeding as before, we can conclude that $\{Q_{1},\ldots,Q_{n}
\}\subseteq\{P_{1},\ldots,P_{m}\}$. Thus $m=n$ and $\{P_{1},\ldots
,P_{m}\}=\{Q_{1},\ldots,Q_{m}\}$\textit{.}
\end{proof}

We next explore the conditions on the graph $E$ under which will every ideal
of $L_{K}(E)$ is a prime ideal.

\begin{proposition}
\label{Everything Prime}Let $E$ be an arbitrary graph. Then the following are
equivalent for $L:=L_{K}(E)$:
\end{proposition}

\begin{enumerate}
\item[(a)] \textit{Every ideal of }$L$\textit{ is a prime ideal;}

\item[(b)] \textit{The graph }$E$\textit{ satisfies Condition (K), and }

\begin{enumerate}
\item[(i)] \textit{the set }$(\mathbf{H},\leq^{\prime})$\textit{ of all the
admissible pairs }$(H,S)$\textit{ in }$E$\textit{ is a chain under the defined
partial order }$\leq^{\prime}$\textit{, }

\item[(ii)] \textit{for each hereditary saturated set }$H$\textit{ of
vertices, }$|B_{H}|\leq1$\textit{ and }

\item[(iii)] \textit{for each }$(H,S)\in\mathbf{H}$\textit{, }$(E\backslash
(H,S))^{0}$\textit{ is downward directed;}
\end{enumerate}

\item[(c)] \textit{All the ideals of }$L$\textit{ are graded and form a chain
under set inclusion.}
\end{enumerate}

\begin{proof}
Assume (a). By Theorem \ref{Prime Intersection <=> Condition K}, the graph $E$
satisfies Condition (K). Suppose there are two admissible pairs $(H_{1}
,S_{1})$ and $(H_{2},S_{2})$ such that
\[
(H_{1},S_{1})\nleqslant^{\prime}(H_{2},S_{2})\text{ and }(H_{2},S_{2}
)\nleqslant^{\prime}(H_{1},S_{1}).
\]
Then the ideal $Q=I(\overline{H},\overline{S})$, where $(\overline
{H},\overline{S})=(H_{1},S_{1})\wedge(H_{2},S_{2})$, is not a prime ideal. To
see this, observe that the lattice $(\mathbf{H,\leq}^{\prime})$ of all the
admissible pairs is isomorphic to the lattice of graded ideals of $L$ (see
Theorem 5.7, \cite{T}) and that
\[
I(H_{1},S_{1})\cdot I(H_{2},S_{2})\subseteq I(H_{1},S_{1})\cap I(H_{2}
,S_{2})=I(\overline{H},\overline{S})
\]
but $I(H_{1},S_{1})\nsubseteq I(\overline{H},\overline{S})$ and $I(H_{2}
,S_{2})\nsubseteq I(\overline{H},\overline{S})$. Thus the set $\mathbf{H}$ of
all the admissible pairs must form a chain under the defined partial order
$\leq^{\prime}$. Now for any given hereditary saturated set $H$ of vertices,
$|B_{H}|\leq1$. Because, otherwise, $B_{H}$ will contain two subsets $S_{1}$
and $S_{2}$ such that $S_{1}\nsubseteq S_{2}$ and $S_{2}\nsubseteq S_{1}$ and
this will give rise to admissible pairs $(H,S_{1})\nleqslant^{\prime}
(H,S_{2})$ and $(H,S_{2})\nleqslant^{\prime}(H,S_{1})$, a contradiction. Also,
since for each $(H,S)\in\mathbf{H}$, $I(H,S)$ is a prime ideal, it follows
from Theorem 3.12 of \cite{R-1}, that $(E\backslash(H,S))^{0}$ is downward
directed. This proves (b).

Assume (b). Since the graph satisfies Condition (K), every ideal of $L$ is
graded and so is of the form $I(H,S)$ for some admissible pair $(H,S)\in
\mathbf{H}$. Since $(\mathbf{H,\leq}^{\prime})$ is a chain and is isomorphic
to the lattice of ideals of $L$, it follows that the ideals of $L$ form a
chain under set inclusion. This proves (c).

Assume (c). Let $P$ be any ideal of $L$. Suppose $I,J$ are two ideals such
that $IJ\subseteq P$. By hypothesis, one of them is contained in the other,
say $I\subseteq J$. Moreover, since the ideals are graded, $I=I^{2}$ ( by
Corollary 2.5, \cite{ABCR}) and so $I=IJ\subseteq P$. Thus $P$ is a prime
ideal and this proves (a).
\end{proof}

The following example illustrates the conditions of Proposition
\ref{Everything Prime}.

\begin{example}
\label{Example-everyone prime}Let $E$ be a graph with $E^{0}=\{v_{i}%
:i=1,2,\cdot\cdot\cdot\}$. For each $i$, there is an edge $e_{i}$ with
$r(e_{i})=v_{i}$ and $s(e_{i})=v_{i+1}$ and at each $v_{i}$ there are two
loops $f_{i},g_{i}$ so that $v_{i}=s(f_{i})=r(f_{i})=s(g_{i})=r(g_{i})$. Thus
$E$ is the graph
\[ 
\xymatrix{ \ar@{.>}[r] &
\bullet_{v_3}\ar@(u,l)_{f_3} \ar@(u,r)^{g_3} \ar@/_.3pc/[rr]_{e_2} & &  
\bullet_{v_2}\ar@(u,l)_{f_2} \ar@(u,r)^{g_2} \ar@/_.3pc/[rr]_{e_1}  && \bullet_{v_1}\ar@(u,l)_{f_1} \ar@(u,r)^{g_1} }
\]
Clearly $E$ is a row-finite graph and the non-empty proper hereditary
saturated subsets of vertices in $E$ are the sets $H_{n}=\{v_{1},\cdot
\cdot\cdot,v_{n}\}$ for some $n\geq1$ and form a chain under set inclusion.
Clearly, $E^{0}\backslash(H_{n},\emptyset)$ is downward directed for each $n$.
Thus the ideals of $L$ are graded prime ideals of the form $I(H_{n}%
,\emptyset)$ and they form a chain under set inclusion. It may be worth noting
that $L_{K}(E)$ does not contain maximal ideals.
\end{example}

\begin{remark}
\label{Primes always exist} As a property that is diametrically opposite of
the property of $L$ stated in Proposition \ref{Everything Prime}, one may ask
under what conditions a Leavitt path algebra contains no prime ideals. It may
be some interest to note that, while a Leavitt path algebra $L$ may not
contain a maximal ideal as indicated in Example \ref{Example-everyone prime},
$L$ will always contain a prime ideal. Indeed if the graph $E$ satisfies
Condition (K), then Theorem \ref{Prime Intersection <=> Condition K} implies
that $L$ contains prime ideals. Suppose $E$ does not satisfy Condition (K).
Then there will be a closed path $c$ based at a vertex $v$ in $E$ such that no
vertex on $c$ is the base of another closed path in $E$. If $H=\{u\in
E^{0}:u\ngeqq v\}$, then $E^{0}\backslash H$ is downward directed and, by
Theorem 3.12 of \cite{R-1}, $I(H,B_{H})$ will be a prime ideal of $L$. Thus a
Leavitt path algebra $L$ always contains a prime ideal.
\end{remark}

Another consequence of Theorem \ref{Prime Intersection <=> Condition K} is the
following Proposition.

\begin{proposition}
\label{Finitely many primes} Let $E$ be an arbitrary graph. Then the following
properties are equivalent for $L:=L_{K}(E)$:
\end{proposition}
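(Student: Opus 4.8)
The plan is to derive the equivalence from two facts already in hand: under Condition (K) every ideal of $L$ is graded (Theorem 6.16 of \cite{T}), and the lattice of graded ideals of $L$ is isomorphic to the lattice $\mathbf{H}$ of admissible pairs $(H,S)$ of $E$ (Theorem 5.7 of \cite{T}). First I would show that if $L$ has only finitely many prime ideals, then $E$ must satisfy Condition (K). Arguing contrapositively, suppose Condition (K) fails; by Proposition 6.12 of \cite{T} there is an admissible pair $(H,S)$ for which the quotient graph $F=E\backslash(H,S)$ does not satisfy Condition (L), so $F$ contains a cycle $c$ with no exit. As in the proof of Lemma \ref{No L => No prime intersection}, the ideal $A$ of $L_{K}(F)$ generated by the vertices of $c$ is isomorphic to $M_{\Lambda}(K[x,x^{-1}])$, which is Morita equivalent to the principal ideal domain $K[x,x^{-1}]$. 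Since $K[x,x^{-1}]$ has infinitely many maximal (hence prime) ideals, so does $A$; because $A$ has local units, each such prime lifts to a prime ideal of $L_{K}(F)\cong L/I(H,S)$, and these pull back to infinitely many distinct prime ideals of $L$, a contradiction. (Alternatively one may produce these primes directly from the description of the non-graded prime ideals in \cite{R-1}, one for each irreducible $p(x)\neq x$ in $K[x,x^{-1}]$.)

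Conversely, assume Condition (K) holds. Then every ideal of $L$ is graded, and by Theorem \ref{Prime Intersection <=> Condition K} every ideal of $L$ is the intersection of the prime ideals containing it. I would next observe that, in this situation, ``$L$ has finitely many prime ideals'' is equivalent to ``$L$ has finitely many ideals'': one direction is trivial, and for the other, if $P_{1},\dots,P_{t}$ are all the prime ideals of $L$, then every ideal is the intersection of the subfamily of the $P_{i}$ containing it, so there are at most $2^{t}$ ideals in all. Finally, under the lattice isomorphism with $\mathbf{H}$, the number of ideals of $L$ equals $\sum_{H}2^{|B_{H}|}$, where $H$ runs over the hereditary saturated subsets of $E^{0}$; this sum is finite exactly when $E^{0}$ has only finitely many hereditary saturated subsets and each of them has only finitely many breaking vertices. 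Combining this with the necessity of Condition (K) gives the equivalence of all the listed properties.

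The step I expect to be the main obstacle is the first one: converting the failure of Condition (K) into infinitely many \emph{prime} ideals of $L$ (rather than merely infinitely many ideals, or infinitely many maximal ideals of a subalgebra). This requires stitching together three standard but distinct facts in the right order --- that prime ideals are preserved under the Morita equivalence $M_{\Lambda}(K[x,x^{-1}])\sim K[x,x^{-1}]$; that a prime ideal of an ideal $A$ with local units lifts to a prime ideal of the ambient algebra (the converse of the implication invoked in Lemma \ref{No L => No prime intersection}); and that the prime ideals of $L/I(H,S)$ are exactly the prime ideals of $L$ containing $I(H,S)$. The remaining implications amount to bookkeeping with the admissible-pair lattice.
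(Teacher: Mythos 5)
Your argument is correct, but the way you obtain the necessity of Condition (K) is genuinely different from the paper's. The paper proves (a) $\Rightarrow$ (b) directly: if some prime $P$ were non-graded, then by Theorem 3.12 of \cite{R-1} it has the form $\left\langle I(H,B_{H}),f(c)\right\rangle$ with $f$ irreducible in $K[x,x^{-1}]$, and replacing $f$ by any of the infinitely many other irreducibles produces infinitely many primes; Condition (K) then follows at once from Corollary 3.13 of \cite{R-1} (all primes graded if and only if (K)). You instead argue contrapositively through Proposition 6.12 of \cite{T}, the no-exit-cycle ideal $A\cong M_{\Lambda}(K[x,x^{-1}])$, the Morita-invariance of primeness, and a lifting of prime ideals from the local-unit ideal $A$ to the ambient algebra --- the converse of the implication used in Lemma \ref{No L => No prime intersection}. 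That lifting is true but is nowhere proved in the paper, so you should include the short verification (for a prime $Q$ of $A$, the set $P=\{r:ArA\subseteq Q\}$ is a prime ideal of $L_{K}(E\backslash(H,S))$ with $P\cap A=Q$), or else fall back on your parenthetical alternative, which is exactly the paper's mechanism. Your second half is essentially the paper's: finiteness of the set of primes combined with Theorem \ref{Prime Intersection <=> Condition K} bounds the number of ideals by $2^{t}$, and your count of the ideals as $\sum_{H}2^{|B_{H}|}$ via the admissible-pair lattice of \cite{T} replaces the paper's citation of Corollary 12 of \cite{R-2}, making the step from finitely many ideals to the graph condition in (c) self-contained. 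Do record (b) explicitly: under (K) every ideal, hence every prime, is graded, so (a) $\Rightarrow$ (b), while (b) $\Rightarrow$ (a) is trivial.
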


\begin{enumerate}
\item[(a)] $L$\textit{ contains at most finitely many prime ideals;}

\item[(b)] $L$\textit{ contains at most finitely many prime ideals and all of
them are graded ideals;}

\item[(c)] \textit{The graph }$E$\textit{ satisfies Condition (K) and there
are only finitely many hereditary saturated subsets }$H$\textit{ of vertices
with the corresponding set of breaking vertices }$B_{H}$\textit{ finite;}

\item[(d)] $L$\textit{ has at most finitely many ideals.}
\end{enumerate}

\begin{proof}
Assume (a). If $L$ contains a non-graded prime ideal $P$ and $H=P\cap E^{0}$,
then, by Theorem 3.12 of \cite{R-1}, $P=\left\langle I(H,B_{H}
),f(c)\right\rangle $, where $f(x)$ is an irreducible polynomial belonging to
the Laurent polynomial ring $K[x,x^{-1}]$. Then, for each of the infinitely
many prime ideals $g(x)\in K[x,x^{-1}]$, we will have a prime ideal
$P_{g}=\left\langle I(H,B_{H}),g(c)\right\rangle $ of $L$, a contradiction. So
all the prime ideals of $L$ are graded. This proves (b).

Assume (b). Since every prime ideal of $L$ is graded, $E$ satisfies Condition
(K), by Corollary 3.13 of \cite{R-1}. Then, by Theorem
\ref{Prime Intersection <=> Condition K}, every ideal of $L$ is an
intersection of prime ideals. Since there are only finitely many distinct
possible intersections of finitely many prime ideals, $L$ contains only a
finite number of ideals all of which are graded. The conclusion of (c) then
follows from Corollary 12 of \cite{R-2}.

It is clear (c)$\Rightarrow$(d)$\Rightarrow$(a).
\end{proof}

\begin{remark}
\label{Finite intersection of primes} As a natural follow-up of Theorem
\ref{Prime Intersection <=> Condition K}, one may wish to explore the
conditions on $E$ under which every ideal of $L$, instead of being an
intersection of possibly infinitely many prime ideals, just an intersection of
no more than a finite number of prime ideals of $L$. In this case, $\{0\}$
will be the intersection of only a finite number of prime ideals. This means
$L$ itself contains only a finitely many prime ideals. Since every ideal of
$L$ is an intersection of prime ideals belonging to this finite set, it is
clear that $L$ must then contain only a finite number of ideals. Thus, this
property is then equivalent to the graph $E$ satisfying the condition of (c)
of the above Proposition \ref{Finitely many primes}.
\end{remark}

Another question, that is naturally related to Theorem
\ref{Prime Intersection <=> Condition K}, is to find conditions under which
every ideal of a Leavitt path algebra $L_{K}(E)$ is an intersection of maximal
ideals. For finite graphs $E$, more specifically when $E^{0}$ is finite, we
have a complete, easily derivable, answer.

\begin{proposition}
\label{Finite intersection of maximals} Let $E$ be a graph with $E^{0}$
finite. Then the following are equivalent for $L:=L_{K}(E)$:
\end{proposition}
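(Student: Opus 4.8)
The plan is to prove the stated equivalences by a cycle of implications in which Condition (K) and the finiteness of $E^{0}$ carry most of the weight. The first thing to record is that, since $E^{0}$ is finite, $L$ is a unital ring, so every proper ideal sits inside a maximal ideal and there is no shortage of maximal ideals to intersect. The second, decisive, observation is that once Condition (K) holds the ideal lattice of $L$ is finite: finiteness of $E^{0}$ forces $B_{H}=\emptyset$ for every hereditary saturated $H$, so by \cite{T} the graded ideals are exactly the $I(H)$ with $H$ hereditary saturated, and by (Theorem 6.16, \cite{T}) every ideal is graded when Condition (K) holds. I would begin with the easy half: a maximal ideal is prime, hence an intersection of maximal ideals is an intersection of prime ideals, so if every ideal of $L$ is an intersection of maximal ideals then Theorem \ref{Prime Intersection <=> Condition K} immediately yields that $E$ satisfies Condition (K). This delivers the ``$E$ satisfies Condition (K)'' content without having to classify maximal ideals.

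Next, working under Condition (K), I would isolate the key reduction: \emph{every ideal of $L$ is an intersection of maximal ideals if and only if every prime ideal of $L$ is maximal}. One direction is immediate from Theorem \ref{Prime Intersection <=> Condition K} together with Lemma \ref{Graded => Primitive intersection} (every ideal is an intersection of primes, and primes are assumed maximal). For the converse, given a prime ideal $P$, finiteness of the ideal lattice lets me write $P=M_{1}\cap\cdots\cap M_{k}$ as a finite intersection of the maximal ideals containing it (which may be taken irredundant, cf. Proposition \ref{Uniqueness}); then $M_{1}\cdots M_{k}\subseteq M_{1}\cap\cdots\cap M_{k}=P$ and primeness of $P$ forces $M_{i}\subseteq P$ for some $i$, hence $M_{i}=P$, so $P$ is maximal.

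Finally I would translate ``every prime ideal of $L$ is maximal'' into graph language. By (Theorem 3.12, \cite{R-1}), under Condition (K) the prime ideals of $L$ are precisely the $I(H)$ with $H$ hereditary saturated and $E^{0}\setminus H$ downward directed; and $I(H)$ is a maximal ideal exactly when $L/I(H)\cong L_{K}(E\setminus H)$ is a simple ring. For a graph with finitely many vertices, simplicity of $L_{K}(E\setminus H)$ is equivalent to $E\setminus H$ satisfying Condition (L) — automatic here, since Condition (K) passes to $E\setminus H$ — together with $E\setminus H$ having no nonempty proper hereditary saturated subset, i.e.\ (using that hereditary saturated subsets of $E\setminus H$ correspond to those of $E$ containing $H$) $H$ being a maximal element of the poset of proper hereditary saturated subsets of $E^{0}$. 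So the graph-theoretic condition reads: $E$ satisfies Condition (K) and every hereditary saturated $H\subsetneq E^{0}$ with $E^{0}\setminus H$ downward directed is a maximal proper hereditary saturated subset of $E^{0}$. Assembling the previous two paragraphs then closes the cycle.

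The main obstacle I anticipate is bookkeeping rather than conceptual. In the step ``intersection of maximals $\Rightarrow$ Condition (K)'' I must be careful not to presuppose finiteness of the ideal lattice (this is why that step is routed through ``maximal $\Rightarrow$ prime'' and Theorem \ref{Prime Intersection <=> Condition K}, not through the lattice argument), and in the last step I need the standard fact that the finite-graph simplicity criterion collapses to ``no nontrivial hereditary saturated subsets'' once Condition (L) is granted. A secondary subtlety is that ``intersection of maximal ideals'' a priori allows infinite intersections; this is harmless because, once Condition (K) is in force, the ideal lattice is finite, so every such intersection is in fact finite and the product argument above applies.
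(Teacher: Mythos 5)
Your proposal does not actually prove the proposition as stated. The ``following'' in Proposition \ref{Finite intersection of maximals} are four specific conditions: (a) every ideal is an intersection of maximal ideals; (b) $L=S_{1}\oplus\cdots\oplus S_{n}$ with each $S_{i}$ a graded ideal that is a simple unital ring; (c) every ideal is graded and a ring direct summand; (d) $E$ satisfies Condition (K) and $E^{0}$ is a disjoint union of finitely many hereditary saturated sets having no nonempty proper hereditary saturated subsets. What you prove is an equivalence between (a) and a condition of your own devising (``every prime is maximal,'' translated into graph terms), and you never establish (b), (c) or the paper's (d). The parts that are sound do overlap with the paper's (a)$\Rightarrow$(b): maximal ideals are prime, so Theorem \ref{Prime Intersection <=> Condition K} gives Condition (K); the ideal lattice is then finite; and your product argument ($M_{1}\cdots M_{k}\subseteq M_{1}\cap\cdots\cap M_{k}=P$ with $P$ prime) correctly shows every prime is maximal. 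But to reach the paper's conclusions one must go further: apply the Chinese Remainder Theorem to $\{0\}=M_{1}\cap\cdots\cap M_{n}$ to obtain the decomposition in (b), deduce (c), and translate (b) into the disjoint-union statement (d). Your closing graph condition (``every hereditary saturated $H\subsetneq E^{0}$ with downward directed complement is a maximal proper hereditary saturated set'') is not the paper's (d), and you do not show the two are equivalent; so even granting your cycle, the stated proposition is not proved.

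There is also a concrete false step: ``finiteness of $E^{0}$ forces $B_{H}=\emptyset$.'' Finitely many vertices does not bound the number of edges, so infinite emitters and breaking vertices can occur: take $E^{0}=\{u,v,w\}$ with $v$ emitting infinitely many edges to $w$ and one edge to $u$; then $H=\{w\}$ is hereditary saturated and $v\in B_{H}$. Consequently your identification of the graded ideals with the $I(H)$, your use of Theorem 3.12 of \cite{R-1} in the form ``primes are the $I(H)$ with $E^{0}\setminus H$ downward directed'' (they are the $I(H,B_{H})$), and the asserted bijection between hereditary saturated subsets of the quotient graph and those of $E$ containing $H$ all need the breaking-vertex bookkeeping you discarded. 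The finiteness of the ideal lattice that you rely on does survive (since $B_{H}\subseteq E^{0}$, there are only finitely many admissible pairs, which is how the paper argues via \cite{T}), but the final graph-theoretic characterization as you wrote it is not correct for graphs with infinite emitters, which the hypothesis ``$E^{0}$ finite'' does not exclude.
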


\begin{enumerate}
\item[(a)] \textit{Every ideal of }$L$\textit{ is an intersection of maximal
ideals of }$L$\textit{;}

\item[(b)] $L=S_{1}\oplus\cdot\cdot\cdot\oplus S_{n}$\textit{ where }
$n>0$\textit{ is an integer, each }$S_{i}$\textit{ is a graded ideal which is
a simple ring with identity and }$\oplus$\textit{ is ring direct sum;}

\item[(c)] \textit{Every ideal of }$L$\textit{ is graded and is a ring direct
summand of }$L$\textit{;}

\item[(d)] \textit{The graph }$E$\textit{ satisfies Condition (K) and }$E^{0}
$\textit{ is the disjoint union of a finite number of hereditary saturated
subsets }$H_{i}$\textit{ each of which contains no non-empty proper hereditary
saturated subsets of vertices.}
\end{enumerate}

\begin{proof}
Assume (a). Since every maximal ideal is prime, Theorem
\ref{Prime Intersection <=> Condition K} implies that Condition (K) holds in
$E$ and so, by Theorem 6.16 of \cite{T}, every ideal of $L$ is graded. Since
$E^{0}$ is finite, we conclude, from the description of the graded ideals in
Theorem 5.17, \cite{T}, that $L$ contains only a finite number of ideals and,
in particular, finitely many maximal ideals. So we can write, by hypothesis,
$\{0\}=M_{1}\cap\cdot\cdot\cdot\cap M_{n}$ where the $M_{i}$ are all the
maximal ideals of $L$. Now apply the Chinese Remainder Theorem to conclude
that the map $a\mapsto(a+M_{1},\cdot\cdot\cdot,a+M_{n})$ is an epimorphism and
hence an isomorphism $\theta$ from $L$ to $L/M_{1}\oplus\cdot\cdot\cdot\oplus
L/M_{n}$. For each $i$, let $S_{i}$ be the (graded) ideal of $L$ isomorphic to
$L/M_{i}$ under the isomorphism $\theta$. Then $S_{i}$ is a simple ring with
identity and $L=\bigoplus\limits_{i=1}^{n}S_{i}$. This proves (b).

Assume (b). Let $A$ be a non-zero ideal of $L$. Then $A=(A\cap S_{1}
)\oplus\cdot\cdot\cdot\oplus(A\cap S_{n})$. Since the $S_{i}$ are all simple
rings, $A\cap S_{i}=0$ or $S_{i}$. Hence $A=S_{i_{1}}\oplus\cdot\cdot
\cdot\oplus S_{i_{k}}$, where $\{i_{1},\cdot\cdot\cdot,i_{k}\}\subseteq
\{1,\cdot\cdot\cdot,n\}$ and $A$ is a direct summand of $L$. Also $A$ is
clearly a graded ideal of $L$. This proves (c).

Assume (c). Since $L$ is a ring with identity $1$, $L$ contains maximal ideals
$M$ which are all direct summands of $L$ and whose complements $S$ will be
ideals containing no other non-zero ideals of $L$. Now the graded ideal $S$
contains local units, as it is isomorphic to a Leavitt path algebra, by
Theorem 6.1 of \cite{RT}. This implies that every ideal of $S$ is also an
ideal of $L$ and so the ideal $S$ will be a simple ring. Moreover, $S$ is
generated by a central idempotent, as it is a ring direct summand of a ring
with identity. By Zorn's Lemma choose a maximal family $\{S_{i}:i\in I\}$ of
distinct ideals $S_{i}$ of $L$ each of which is a simple ring. We claim that
$L=\sum\limits_{i\in I}S_{i}$. Otherwise, choose an ideal $M$ maximal with
respect to the property that $\sum\limits_{i\in I}S_{i}\subseteq M$, but
$1\notin M$. $M$ is clearly a maximal ideal of $L$. Since $M$ is a direct
summand, $L=M\oplus S$ and this case, $\{S\}\cup\{S_{i}:i\in I\}$ violates the
maximality of $\{S_{i}:i\in I\}$. Hence $L=\sum\limits_{i\in I}S_{i}$.
Actually, $L=\bigoplus\limits_{i\in I}S_{i}$ as each $S_{i}$ is generated by a
central idempotent. Since $L$ is a ring with identity, $I$ must be a finite
set. This proves (b).

Assume (b). For each $j$, let $M_{j}=\bigoplus\limits_{i\neq j,1\leq i\leq
n}S_{i}$ be a maximal ideal of $L$. Clearly $\bigcap\limits_{j=1}^{n}M_{j}=0$.
Moreover, if $A$ be any ideal of $L$, then $A$ is a direct sum of a
subcollection of the summands $S_{i}$ and then it is easy to see that
$A=\bigcap\limits_{A\subseteq M_{j}}M_{j}$. This proves (a).

We prove (b) $\Leftrightarrow$ (d). Assume (b). It is clear that each ideal
$A$ of $L$ is a direct sum of a subset of the graded ideals $S_{i}$ and hence
is graded. This implies that the graph $E$ satisfies Condition (K). Now for
each $i$, the graded ideal $S_{i}$ is a simple ring and so contains no
non-zero proper ideal of $L$. Hence $H_{i}=S_{i}\cap E^{0}$ is a hereditary
saturated set and contains no proper non-empty hereditary saturated subset of
vertices. Also $S_{i}S_{j}=0$ for all $i\neq j$ and this implies that the sets
$H_{i}$ are all pair-wise disjoint. Further $E^{0}=\bigcup\limits_{i=1}
^{n}H_{i}$. This proves (d).

Assume (d) so, for some $n>0$, $E^{0}=\bigcup\limits_{i=1}^{n}H_{i}$, where
the $H_{i}$ are pair-wise disjoint hereditary saturated subsets having no
proper non-empty hereditary saturated subsets of vertices in $E$. It is clear
that
\[
E^{0}\backslash H_{i}=\{u\in E^{0}:u\ngeqq v\text{ for any }v\in H_{i}\}.
\]
For each $i=1,\cdot\cdot\cdot,n$, let $E_{i}$ be the subgraph with
$(E_{i})^{0}=H_{i}$ and $(E_{i})^{1}=\{e\in E^{1}:s(e)\in H_{i}\}$. Clearly
each $E_{i}$ is a complete subgraph of $E$ satisfying Condition (K) and having
no proper non-empty hereditary saturated subsets of vertices. Moreover, the
graphs $E_{i}$ are all pair-wise disjoint. It then follows that $L\cong
\bigoplus\limits_{i=1}^{n}L_{K}(E_{i})$, where each $L_{K}(E_{i})$ is a simple
ring with identity (see \cite{AA}). This proves (b).
\end{proof}

\begin{remark}
\label{Intersection of maximals when E arbitrary} In the case when $E$ is an
arbitrary graph, we have the following (perhaps not a satisfactory) answer to
the above question: Every ideal of $L$ is an intersection of maximal ideals if
and only if, every ideal of $L$ is graded and for each ideal $A$ of $L$
(including the zero ideal), $L/A$ is a subdirect product of simple Leavitt
path algebras. To see this, note that if $A=\bigcap\limits_{i\in I}M_{i}$
where the $M_{i}$ are maximal ideals, then we get a homomorphism
$\theta:L\longrightarrow\prod\limits_{i\in I}L/M_{i}$ given by $x\longmapsto
(\cdot\cdot\cdot,x+M_{i},\cdot\cdot\cdot)$ with $\ker(\theta)=A$. Clearly
$\theta(L)$ maps onto $L/M_{i}$ under the coordinate projection $\eta
_{i}:\prod\limits_{i\in I}L/M_{i}\longrightarrow L/M_{i}$. \ This shows that
$L/A$ is a subdirect product of the simple rings $L/M_{i}$ each of which can
be realized as a Leavitt path algebra as each $M_{i}$ is a graded ideal of
$L$. Conversely, suppose $L/A\subseteq\prod\limits_{i\in I}L_{i}$ is a
subdirect product of simple rings $L_{i}$ and, for each $i\in I$, $\eta
_{i}:\prod\limits_{i\in I}L_{i}\longrightarrow L_{i}$ is the coordinate
projection. For each $i\in I$, let $A_{i}\supseteq A$ denote the ideal of $L$
such that $A_{i}=L\cap\ker(\eta_{i})$. Then it is easy to see that each
$A_{i}$ is a maximal ideal of $L$ and $A=\bigcap\limits_{i\in I}A_{i}$.
\end{remark}

\section{Prime factorization and powers of an ideal}

In this section, we consider the question of factorizing an ideal of a Leavitt
path algebra $L$ as a product of prime ideals. We first obtain a unique
factorization theorem for a graded ideal of $L$ as a product of prime ideals.
A perhaps interesting result is that if $I$ is a graded ideal and
$I=P_{1}\cdot\cdot\cdot P_{n}$ is a factorization of $I$ as an irredundant
product of prime ideals $P_{i}$, then necessarily all the ideals $P_{i}$ must
be graded ideals and moreover, $I=P_{1}\cap\ldots\cap P_{n}$. We also point
out a weaker factorization theorem for non-graded ideals as products of
primes. We end this section by showing that, given any non-graded ideal $I$ in
a Leavitt path algebra $L$, its powers $I^{n}$ $(n\geq1)$ are all non-graded
and distinct, but the intersection of its powers $\bigcap\limits_{n=1}
^{\infty}I^{n}$ is a graded ideal and is indeed the largest graded ideal of
$L$ contained in $I$. As a corollary, we obtain an analogue of Krull's theorem
for Leavitt path algebra (see \cite{ZS}): The intersection 
${\displaystyle\bigcap\limits_{n=1}^{\infty}} I^{n}=0$ for an ideal $I$ of $L$ 
if and only if $I$ contains no vertices of $E$.

We begin with a useful property of graded ideals.

\begin{lemma}
\label{Intersection = product} Let $E$ be an arbitrary graph and let $I$ be a
graded ideal of $L:=L_{K}(E)$. Then $I=P_{1}\cdot\cdot\cdot P_{n}$ is a
product of arbitrary ideals $P_{i}$ if and only if $I=P_{1}\cap\ldots\cap
P_{n}$.
\end{lemma}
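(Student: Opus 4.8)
The plan is to prove the two implications separately, with the substantive direction being "product $\Rightarrow$ intersection."

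\textbf{The easy direction.} Suppose $I = P_1 \cap \cdots \cap P_n$. Since always $P_1 \cdots P_n \subseteq P_1 \cap \cdots \cap P_n$, we only need the reverse inclusion under the hypothesis that $I$ is graded. Here I would use the key property of graded ideals that $I = I^2$ (Corollary 2.5 of \cite{ABCR}, already invoked in the excerpt). Actually this alone is not quite enough; the cleaner route is to observe that the intersection of the $P_i$ contains the product, and conversely any product of ideals all containing $I$... no. Let me reconsider: if $I = P_1 \cap \cdots \cap P_n$ then each $P_i \supseteq I$, so $P_1 \cdots P_n \supseteq I \cdots I = I^n = I$ (using $I^2 = I$ for the graded ideal $I$, hence $I^n = I$ for all $n \ge 1$). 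Combined with $P_1 \cdots P_n \subseteq P_1 \cap \cdots \cap P_n = I$, we get $P_1 \cdots P_n = I$. So this direction is a two-line argument once $I^n = I$ is in hand.

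\textbf{The main direction.} Suppose $I = P_1 \cdots P_n$ with $I$ graded. Again $I = P_1 \cdots P_n \subseteq P_1 \cap \cdots \cap P_n$ automatically, so the work is to show $P_1 \cap \cdots \cap P_n \subseteq I$. This is the part I expect to be the real obstacle, and I do not think it can hold for arbitrary ideals $P_i$ without using the graded hypothesis on $I$ in an essential way. The idea I would pursue: let $H = I \cap E^0$ and $H_i = P_i \cap E^0$. Since $I = I(H,S)$ is graded, it is generated (as an ideal) by vertices and elements $v^H$; in particular $I$ is "controlled" by the idempotents it contains. I would try to show that $P_1 \cap \cdots \cap P_n$ and $I$ have the same intersection with $E^0$ and the same breaking-vertex data, and then, because $P_1 \cap \cdots \cap P_n \supseteq I$ is graded-over... hmm, but the intersection of non-graded ideals need not be graded.

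A cleaner strategy: pass to the quotient $L/I \cong L_K(E\backslash(H,S))$. Under this quotient the images $\bar P_i$ satisfy $\bar P_1 \cdots \bar P_n = 0$. I want to conclude $\bar P_1 \cap \cdots \cap \bar P_n = 0$, i.e. that in $L_K(E\backslash(H,S))$ a product of ideals being zero forces their intersection to be zero. Equivalently, I must show that if $J_1 \cdots J_n = 0$ in a Leavitt path algebra then $J_1 \cap \cdots \cap J_n = 0$ — but that is false in general (take $J = B$ in Example~\ref{Laurent => No Prime Intersection}, where $B^2 \ne B$ but... actually $B \cap B = B \ne B^2$, so the statement fails). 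Hence the graded hypothesis on $I$ must re-enter: what is special is that $L/I$ has its own structure, but that is not enough either. So the correct reading must be that the $P_i$, while allowed to be arbitrary, end up forced to be graded — indeed the excerpt's introduction says exactly this. Therefore the real plan is: first prove that each $P_i$ is necessarily graded (this is presumably an earlier lemma or follows from $I = P_1 \cdots P_n$ graded via an argument that a non-graded $P_i$ would make the product non-graded, using that non-graded ideals look like $\langle I(H_i,B_{H_i}), f(c)\rangle$ and products of such things pick up irreducible-polynomial data that cannot cancel); then, once all $P_i$ are graded, invoke that for graded ideals $P_i \cdot P_j = P_i \cap P_j$ (since graded ideals form a sublattice on which product equals intersection — this is the $n=2$ case, provable directly from $I(H_1,S_1) \cdot I(H_2,S_2) = I(H_1,S_1) \cap I(H_2,S_2)$, a fact used verbatim in the proof of Proposition~\ref{Everything Prime}), and induct on $n$ to get $P_1 \cdots P_n = P_1 \cap \cdots \cap P_n$. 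The induction step uses $P_1 \cdots P_{n-1} = P_1 \cap \cdots \cap P_{n-1}$ is graded, then apply the $n=2$ identity to $(P_1 \cap \cdots \cap P_{n-1})$ and $P_n$. The hardest step is the very first one — forcing each $P_i$ to be graded from the mere gradedness of the product $I$ — and I would handle it by a contradiction argument: if some $P_i$ is non-graded, write $P_i = \langle I(H_i, B_{H_i}), f_i(c_i) \rangle$ per the description of non-graded ideals in \cite{R-2}, track what the product $P_1 \cdots P_n$ looks like on the cycle $c_i$ without exits (where the relevant corner is a Laurent polynomial ring), and show the product there is a proper power-type ideal $\langle \prod f_i \rangle$-flavored object that is not graded, contradicting gradedness of $I$.
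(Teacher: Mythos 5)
Your first direction is fine and is essentially the paper's: from $I\subseteq P_i$ for all $i$ and the idempotence $I=I^{2}$ of graded ideals (Corollary 2.5, \cite{ABCR}) you get $I=I^{n}\subseteq P_{1}\cdots P_{n}\subseteq P_{1}\cap\ldots\cap P_{n}=I$; the paper instead pulls a local unit $u\in I$ (via Theorem 6.1, \cite{RT}) and writes $a=au\cdots u$, which is the same mechanism.

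The main direction, however, has a genuine gap, and ironically you discarded the correct idea. The statement you declared ``false in general'' --- that $J_{1}\cdots J_{n}=0$ in a Leavitt path algebra forces $J_{1}\cap\ldots\cap J_{n}=0$ --- is in fact true: $(J_{1}\cap\ldots\cap J_{n})^{n}\subseteq J_{1}\cdots J_{n}=0$, so the intersection is a nilpotent ideal, and a Leavitt path algebra has zero Jacobson radical, hence no nonzero nilpotent ideals. Your purported counterexample ($J=B$ with $B^{2}\neq B$ from Example \ref{Laurent => No Prime Intersection}) does not satisfy the hypothesis, since $B^{2}\neq 0$; you conflated ``product equals intersection for arbitrary ideals'' (false) with ``a zero product forces a zero intersection'' (true, by semiprimeness). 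This is exactly the paper's proof: if $I=P_{1}\cdots P_{n}$ but $J:=P_{1}\cap\ldots\cap P_{n}\neq I$, then $J/I$ is a nonzero ideal of $L/I\cong L_{K}(E\backslash(H,S))$ with $(J/I)^{n}=0$, a contradiction; one never needs to analyze the images $\bar{P}_{i}$ individually. The fallback route you chose instead cannot be repaired as stated: its first step, that gradedness of $I=P_{1}\cdots P_{n}$ forces each $P_{i}$ to be graded, is false for arbitrary ideals. For example, let $E$ consist of two disjoint loops $c_{1},c_{2}$ at vertices $v_{1},v_{2}$, so $L\cong K[x,x^{-1}]\oplus K[y,y^{-1}]$; then $I=\left\langle v_{2}\right\rangle$ is graded and $I=P_{1}P_{2}=P_{1}\cap P_{2}$ with $P_{1}=\left\langle v_{1}+c_{1},v_{2}\right\rangle$ non-graded and $P_{2}=\left\langle v_{2}\right\rangle$. (Gradedness of the factors is forced only under the extra hypotheses of Theorem \ref{Graded Prime factorization}(a): prime factors and an irredundant product.) Finally, the $n=2$ identity ``product $=$ intersection for graded ideals'' that you planned to import from the proof of Proposition \ref{Everything Prime} appears there only as an inclusion; the equality is precisely (a special case of) the lemma being proved, so your induction would be circular unless you supply the semiprimeness argument above --- at which point that argument already gives the whole direction in one step.
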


\begin{proof}
Suppose $I=P_{1}\cap\ldots\cap P_{n}$. Clearly $P_{1}\cdot\cdot\cdot
P_{n}\subseteq P_{1}\cap\ldots\cap P_{n}=I$. To prove the reverse inclusion,
note that, by (Theorem 6.1, \cite{RT}), the graded ideal $I$ is isomorphic to
a Leavitt path algebra of a suitable graph and so it contains local units. Let
$a\in P_{1}\cap\ldots\cap P_{n}$. Then there is a local unit $u=u^{2}\in I$
such that $ua=a=au$. Since, for each $i$, $u\in P_{i}$, multiplying $a$ by $u$
on the right $(n-1)$ times, we obtain $a=au\ldots u\in P_{1}\cdot\cdot\cdot
P_{n}$. Hence $I=P_{1}\cdot\cdot\cdot P_{n}$.

Conversely, suppose $I=P_{1}\cdot\cdot\cdot P_{n}$. If $I\neq P_{1}\cap
\ldots\cap P_{n}$, then, in $L/I$, $(P_{1}\cap\ldots\cap P_{n})/I$ is a
non-zero nilpotent ideal whose $n-$th power is zero. This is a contradiction
since $L/I$ is isomorphic to a Leavitt path algebra, as $I$ is a graded ideal.
Hence $I=P_{1}\cap\ldots\cap P_{n}$.
\end{proof}

As \ the operation $\cap$ is commutative, we obtain from the preceding Lemma
the following corollary.

\begin{corollary}
\label{Permuted product} If a graded ideal $I$ of $L_{K}(E)$ is a product of
ideals $I=P_{1}\cdot\cdot\cdot P_{n}$, then $I$ is equal to any permuted
product of these ideals, that is, $I=P_{\sigma(1)}\cdot\cdot\cdot
P_{\sigma(n)}$ where $\sigma$ is a permutation of the set $\{1,\ldots,n\}$.
\end{corollary}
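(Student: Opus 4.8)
The final statement to prove is Corollary~\ref{Permuted product}: if a graded ideal $I$ of $L_K(E)$ factors as a product $I = P_1 \cdots P_n$ of ideals, then $I$ equals every permuted product $P_{\sigma(1)} \cdots P_{\sigma(n)}$.

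The plan is to reduce everything to Lemma~\ref{Intersection = product}, which is the substantive input. That lemma asserts that for a graded ideal $I$, a factorization $I = P_1 \cdots P_n$ into arbitrary ideals is equivalent to the statement $I = P_1 \cap \cdots \cap P_n$. So the first step is simply to invoke the forward direction of the lemma: from the hypothesis $I = P_1 \cdots P_n$ we obtain $I = P_1 \cap \cdots \cap P_n$. The second step is to observe that the finite intersection of ideals is a commutative (indeed fully symmetric) operation, so $P_1 \cap \cdots \cap P_n = P_{\sigma(1)} \cap \cdots \cap P_{\sigma(n)}$ for every permutation $\sigma$ of $\{1,\ldots,n\}$; hence $I = P_{\sigma(1)} \cap \cdots \cap P_{\sigma(n)}$. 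The third and final step is to apply the converse direction of Lemma~\ref{Intersection = product} to the reordered list $P_{\sigma(1)}, \ldots, P_{\sigma(n)}$: since $I$ is graded and equals their intersection, it also equals their product, i.e. $I = P_{\sigma(1)} \cdots P_{\sigma(n)}$. This closes the argument.

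There is essentially no obstacle here, since all the real work is already encapsulated in Lemma~\ref{Intersection = product}; the only thing to be slightly careful about is that we must apply the lemma in both directions (product $\Rightarrow$ intersection for the original ordering, intersection $\Rightarrow$ product for the permuted ordering), which is legitimate because the lemma is stated as an ``if and only if'' for any finite list of arbitrary ideals and the intersection is manifestly permutation-invariant. It is worth noting that the corollary would fail without the grading hypothesis, since for non-graded ideals products need not even be contained in the way required (cf. Example~\ref{Laurent => No Prime Intersection}, where $A \cdot A = A^2 \subsetneq A = A \cap A$), so the appeal to the graded case is genuinely essential and should be flagged as such in the write-up.
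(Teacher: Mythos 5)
Your proof is correct and is essentially the paper's own argument: the paper derives the corollary in one line from Lemma~\ref{Intersection = product} together with the commutativity of $\cap$, exactly the product $\Rightarrow$ intersection $\Rightarrow$ permuted product chain you spell out. Your closing aside is slightly off, though: Example~\ref{Laurent => No Prime Intersection} shows that product and intersection can differ for non-graded ideals, but it does not show the permutation statement itself fails there (that algebra is commutative, so permuted products of its ideals trivially coincide).
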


We now are ready to prove a uniqueness theorem in factorizing a graded ideal
of a Leavitt path algebra as a product of prime ideals. Recall that
$I=P_{1}\cdot\cdot\cdot P_{n}$ is an \textbf{irredundant product }of the
ideals $P_{i}$, if $I$ is not the product of a proper subset of this set of
$n$ ideals $P_{i}$.

\begin{theorem}
\label{Graded Prime factorization}Let $E$ be an arbitrary graph and let $I$ be
a graded ideal of $L:=L_{K}(E)$.
\end{theorem}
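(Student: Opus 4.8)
The statement to be proved (Theorem \ref{Graded Prime factorization}) is the promised uniqueness-of-prime-factorization result for a graded ideal $I$: if $I = P_1 \cdots P_n$ is an irredundant product of prime ideals, then each $P_i$ is necessarily graded, $I = P_1 \cap \cdots \cap P_n$ is the irredundant intersection, and (by Proposition \ref{Uniqueness}) the multiset $\{P_1,\ldots,P_n\}$ is uniquely determined by $I$. The plan is to combine three ingredients already available in the excerpt: Lemma \ref{Intersection = product} (for a graded ideal, a product of ideals equals their intersection), Proposition \ref{Uniqueness} (uniqueness of irredundant prime intersections), and the structural fact from \cite{R-1} that a non-graded prime ideal has the form $\langle I(H,B_H), f(c)\rangle$ for an irreducible $f$.

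First I would apply Lemma \ref{Intersection = product} directly to the factorization $I = P_1 \cdots P_n$: since $I$ is graded, this immediately yields $I = P_1 \cap \cdots \cap P_n$. Next I would argue that the intersection is irredundant: if some $P_i$ contained $\bigcap_{j \neq i} P_j$, then $\bigcap_{j\neq i} P_j = \bigcap_j P_j = I$, and applying Lemma \ref{Intersection = product} in reverse (to the subfamily) gives $I = \prod_{j \neq i} P_j$, contradicting irredundancy of the product. So $I = P_1 \cap \cdots \cap P_n$ is an irredundant intersection of primes.

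The substantive step — the one I expect to be the main obstacle — is showing each $P_i$ must be graded. Suppose some $P_i$ is non-graded. By Theorem 3.12 of \cite{R-1} (quoted in the proof of Proposition \ref{Finitely many primes}), $P_i = \langle I(H,B_H), f(c) \rangle$ where $c$ is a cycle based at $H$-related data and $f$ is irreducible in $K[x,x^{-1}]$. The key point is that the largest graded ideal contained in $P_i$ is $I(H,B_H)$, and $P_i/I(H,B_H)$ sits inside a Laurent-polynomial-type ring as a proper nonzero ideal with no nilpotents. I would then localize the whole situation modulo the graded ideal $\bigcap_{j : P_j \supseteq \text{(graded part)}}$ — more concretely, pass to $L/J$ where $J$ is the largest graded ideal contained in $I$ — and derive a contradiction from the fact that in the quotient (a Leavitt path algebra, hence with zero Jacobson radical and no nonzero nilpotent structure of the relevant kind) a non-graded prime cannot appear in an irredundant intersection whose value is graded: the non-graded prime $P_i$ contributes "too little" and can be absorbed or forces a nilpotent, exactly as in Example \ref{Laurent => No Prime Intersection}. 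Alternatively, and perhaps more cleanly, I would show that if $P_i$ is non-graded then replacing $P_i$ by the graded ideal $I(H,B_H) \subsetneq P_i$ does not change the intersection $P_1 \cap \cdots \cap P_n$ (because the "Laurent coordinate" is already killed by the other factors or by gradedness of $I$), which contradicts irredundancy. Making this absorption argument precise — identifying why the graded part suffices — is where the care is needed.

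Finally, once all $P_i$ are known to be graded and $I = P_1 \cap \cdots \cap P_n$ is an irredundant intersection of (graded) primes, Proposition \ref{Uniqueness} gives that $n$ and the set $\{P_1,\ldots,P_n\}$ are uniquely determined by $I$; and Corollary \ref{Permuted product} shows the factorization is insensitive to the order of the factors, so the product representation is unique up to permutation. I would state the theorem's conclusion in exactly these terms: the $P_i$ are graded, the product equals the intersection, and the factorization is unique up to reordering.
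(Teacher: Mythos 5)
Your reduction of the product to an intersection via Lemma \ref{Intersection = product}, and your observation that irredundancy of the product forces irredundancy of the intersection (by applying the lemma again to a proper subfamily), both match the paper's argument, as does your use of Proposition \ref{Uniqueness} for part (b). But the heart of the theorem --- that every $P_i$ must be graded --- is exactly the step you leave unfinished, and your sketches of it do not work as stated. Your first idea (pass to $L/J$ and hunt for a nilpotent as in Example \ref{Laurent => No Prime Intersection}) is not an argument; and your ``absorption'' alternative has a logical hole: showing that replacing a non-graded $P_i$ by the smaller graded ideal $gr(P_i)$ leaves the intersection unchanged does \emph{not} contradict irredundancy of $P_1\cap\cdots\cap P_m$, since $gr(P_i)$ is not one of the ideals in that family and irredundancy only forbids one $P_j$ from containing the intersection of the others. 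Moreover, the reason the intersection is unchanged is not any delicate statement about ``Laurent coordinates being killed'': it is simply that $I$ is graded and $I\subseteq P_i$, hence $I\subseteq gr(P_i)$, the largest graded ideal contained in $P_i$ (Lemma 3.6 of \cite{R-1}), so $I\subseteq gr(P_1)\cap\cdots\cap gr(P_m)\subseteq P_1\cap\cdots\cap P_m=I$.

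The paper completes the argument with one further ingredient you never invoke and which is indispensable: if $P$ is prime then $gr(P)$ is again prime (Lemma 3.8 of \cite{R-1}). With that, one extracts from $\{gr(P_1),\ldots,gr(P_m)\}$ a subfamily giving an \emph{irredundant} intersection equal to $I$, and then Proposition \ref{Uniqueness}, applied to the two irredundant prime intersections $I=P_1\cap\cdots\cap P_m=gr(P_{i_1})\cap\cdots\cap gr(P_{i_k})$, yields $k=m$ and $\{gr(P_{i_1}),\ldots,gr(P_{i_k})\}=\{P_1,\ldots,P_m\}$, whence each $P_i=gr(P_i)$ is graded. That is the contradiction-free version of the ``absorption'' you were reaching for; without the primeness of $gr(P_i)$ and the appeal to Proposition \ref{Uniqueness} the step does not close. (The structure theorem for non-graded primes, $P=\langle I(H,B_H),f(c)\rangle$, which you quote, is not needed here.)
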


\begin{enumerate}
\item[(a)] \textit{If }$I=P_{1}\cdot\cdot\cdot P_{m}$\textit{ is an
irredundant product of prime ideals }$P_{i}$\textit{, then all the ideals
}$P_{i}$\textit{ are graded and }$I=P_{1}\cap\ldots\cap P_{m}$\textit{.}

\item[(b)] \textit{If}
\[
I=P_{1}\cdot\cdot\cdot P_{m}=Q_{1}\cdot\cdot\cdot Q_{n}
\]
\textit{are two irredundant products of prime ideals }$P_{i}$\textit{ and
}$Q_{j}$\textit{, then }$m=n$\textit{ and }
\[
\{P_{1},\ldots,P_{m}\}=\{Q_{1},\ldots,Q_{m}\}\text{\textit{.}}
\]

\end{enumerate}

\begin{proof}
(a) Now, by Lemma \ref{Intersection = product}, $I=P_{1}\cap\ldots\cap P_{m}$
which is an irredundant intersection as $P_{1}\cdot\cdot\cdot P_{m}$ is an
irredundant product. Note that the graded ideal $I\subseteq gr(P_{i})$ for all
$i=1,\ldots,n$, where $gr(P_{i})$ denotes the largest graded ideal contained
in $P_{i}$ (see Lemma 3.6, \cite{R-1}). Now
\[
I\subseteq gr(P_{1})\cap\ldots\cap gr(P_{m})\subseteq P_{1}\cap\ldots\cap
P_{m}=I\text{.}%
\]
So we conclude that $I=gr(P_{1})\cap\ldots\cap gr(P_{m})$. A priori, it is not
clear whether $I=gr(P_{1})\cap\ldots\cap gr(P_{m})$ is an irredundant
intersection. Suppose $\{gr(P_{i_{1}}),\ldots,gr(P_{i_{k}})\}$ is a
subcollection of the ideals $gr(P_{i})$ such that $I=gr(P_{i_{1}})\cap
\ldots\cap gr(P_{i_{k}})$ is an irredundant intersection. Note that each
$gr(P_{i_{r}})$, for $r=1,\ldots,k$, is a prime ideal, as each $P_{i_{r}}$ is
a prime ideal (Lemma 3.8, \cite{R-1}). Then, by Proposition \ref{Uniqueness},
$k=m$ and $\{gr(P_{i_{1}}),\ldots,gr(P_{i_{k}})\}=\{P_{1},\ldots,P_{m}\}$. By
irredundancy, each $P_{i}=gr(P_{i})$ and hence is a graded ideal.

(b) If $I=P_{1}\cdot\cdot\cdot P_{m}=Q_{1}\cdot\cdot\cdot Q_{n}$ are two
irredundant products of prime ideals $P_{i}$ and $Q_{j}$, then, by Lemma
\ref{Intersection = product},
\[
I=P_{1}\cap\ldots\cap P_{m}=Q_{1}\cap\ldots\cap Q_{n}
\]
are two irredundant intersections of prime ideals and so, by Proposition
\ref{Uniqueness}, $m=n$ and $\{P_{1},\ldots,P_{m}\}=\{Q_{1},\ldots,Q_{m}
\}$\textit{.}
\end{proof}

As noted earlier, for graded ideals $A$ and $B$, the property that $A\cap
B=AB$ came in handy in proving the uniqueness theorem. This property does not
always hold for non-graded ideals. For an easy example consider Example
\ref{Laurent => No Prime Intersection}. Observe that every non-zero ideal of
$L_{K}(E)\cong K[x,x^{-1}]$ is non-graded. Let $A=\left\langle (v+c)^{2}
\right\rangle $ and $B=\left\langle v-c^{2}\right\rangle $. Then $A\cap B\neq
AB$ as $A\cap B=\left\langle (v+c)(v-c^{2})\right\rangle $ while
$AB=\left\langle (v+c)^{2}(v-c^{2})\right\rangle $.

For ideals which are not necessarily graded, we next prove a weaker version of
a uniqueness theorem. For convenience, we call a product of ideals $P_{1}
\cdot\cdot\cdot P_{m}$ \textbf{tight }if $P_{i}\nsubseteq P_{j}$ for all
$i\neq j$. Note that a tight product of prime ideals is necessarily an
irredundant product.

\begin{proposition}
\label{Non-graded factorization} Let $E$ be an arbitrary graph. Suppose
\[
A=P_{1}\cdot\cdot\cdot P_{m}=Q_{1}\cdot\cdot\cdot Q_{n}
\]
are two representations of an ideal $A$ of $L_{K}(E)$ as tight products of
prime ideals $P_{i}$ and $Q_{j}$. Then $m=n$ and $\{P_{1},\ldots
,P_{m}\}=\{Q_{1},\ldots,Q_{m}\}$\textit{.}
\end{proposition}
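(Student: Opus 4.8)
The plan is to mimic the proof of Proposition \ref{Uniqueness}, using primeness to absorb one factor at a time, but being careful that we no longer have the luxury of Lemma \ref{Intersection = product} (which required gradedness). The key observation that makes the argument work is that a product $P_1\cdots P_m$ is always contained in each individual $P_i$, and more importantly it is contained in the \emph{intersection} of the $P_i$; combined with tightness, this lets us run a matching argument purely at the level of containments.

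First I would argue that each $P_i$ contains some $Q_j$. Fix $i$. Since $Q_1\cdots Q_n = A \subseteq P_i$ and $P_i$ is prime, a routine induction on $n$ (using that $P_i$ prime means $XY\subseteq P_i$ forces $X\subseteq P_i$ or $Y\subseteq P_i$) yields $Q_{j}\subseteq P_i$ for some $j=j(i)$. By the symmetric argument, for that $j$ there is an index $k=k(j)$ with $P_{k}\subseteq Q_{j}\subseteq P_i$, hence $P_k\subseteq P_i$; tightness of the product $P_1\cdots P_m$ forces $k=i$, and therefore $P_i = Q_{j(i)}$. Thus the assignment $i\mapsto j(i)$ embeds $\{P_1,\dots,P_m\}$ into $\{Q_1,\dots,Q_n\}$ as sets, and it is injective because the $P_i$ are pairwise distinct (tightness): if $j(i)=j(i')$ then $P_i = Q_{j(i)} = P_{i'}$, so $i=i'$. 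Running the entire argument with the roles of the $P$'s and $Q$'s exchanged gives the reverse inclusion $\{Q_1,\dots,Q_n\}\subseteq\{P_1,\dots,P_m\}$, and hence $m=n$ and $\{P_1,\dots,P_m\}=\{Q_1,\dots,Q_m\}$ as required.

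The one subtlety I want to be careful about is that tightness is stated as $P_i\nsubseteq P_j$ for $i\ne j$, which indeed implies the $P_i$ are pairwise distinct but also, as remarked in the paper, implies the product is irredundant; I will invoke exactly the part I need, namely pairwise-distinctness plus the absorption consequence of primeness, and nowhere do I need $A$ to equal $\bigcap P_i$. So the main (and really only) obstacle is purely bookkeeping: making sure the matching $i\mapsto j(i)$ is well-defined and injective and that the symmetric step is legitimate. Since everything reduces to the same one-factor-absorption trick used in Proposition \ref{Uniqueness}, no genuinely new idea is needed; the proof is essentially a transcription of that argument with ``intersection'' replaced by ``product'' throughout, the point being that for \emph{any} ideals $P_1\cdots P_n \subseteq P_1\cap\cdots\cap P_n$ so the containment-level reasoning is unaffected.
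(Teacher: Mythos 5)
Your proof is correct and follows essentially the same route as the paper's: use primeness to absorb a single factor $Q_{j}\subseteq P_{i}$, then the symmetric step plus tightness to force $P_{i}=Q_{j(i)}$, and finally a matching argument (the paper does this sequentially, you via an explicitly injective assignment $i\mapsto j(i)$, which is only a cosmetic difference). No gap; your observation that neither gradedness nor $A=\bigcap P_{i}$ is needed is exactly the point of the tightness hypothesis.
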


\begin{proof}
Now the prime ideal $P_{1}\supseteq Q_{1}\cdot\cdot\cdot Q_{n}$ and so
$P_{1}\supseteq Q_{i_{1}}$ for some $i_{1}$. By a similar argument, $Q_{i_{1}
}\supseteq P_{j}$ for some $j$. Then $P_{1}\supseteq Q_{i_{1}}\supseteq P_{j}$
and since the product is tight, $P_{1}=P_{j}$. So $P_{1}=Q_{i_{1}}$. Next
start with $P_{2}$ and proceed as before to conclude that $P_{2}=Q_{i_{2}}\neq
Q_{i_{1}}$. Proceeding like this we conclude that $\{P_{1},\ldots
,P_{m}\}\subseteq\{Q_{1},\ldots,Q_{n}\}$. Reversing the role and starting with
the $Q$'s and proceeding as before, we get $\{Q_{1},\ldots,Q_{n}
\}\subseteq\{P_{1},\ldots,P_{m}\}$. Thus $m=n$ and $\{P_{1},\ldots
,P_{m}\}=\{Q_{1},\ldots,Q_{m}\}$.
\end{proof}

Next we consider the powers of an ideal $I$. We begin with the following
useful Lemma.

\begin{lemma}
\label{Power of a non-graded ideal} Suppose $E$ is an arbitrary graph and
$L:=L_{K}(E)$. Let $c$ be a cycle in $E$ with no exits based at a vertex $v$
and let $B=\left\langle p(c)\right\rangle $ be the ideal generated by $p(c)$
in $L$, where $p(x)=1+k_{1}x+\cdot\cdot\cdot+k_{n}x^{n}\in K[x]$. Then
\end{lemma}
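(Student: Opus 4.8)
The lemma concerns a cycle $c$ with no exits based at $v$, and the ideal $B = \langle p(c)\rangle$ where $p(x) = 1 + k_1 x + \cdots + k_n x^n$. The statement is cut off, but the natural (and surely intended) conclusion, in light of the paper's overall goal about powers of non-graded ideals, is a description of $B^m$ for each $m \geq 1$ — presumably $B^m = \langle p(c)^m \rangle$, together with the observation that $B^m$ is non-graded and that the $B^m$ are pairwise distinct (and $\bigcap_m B^m$ equals the graded ideal $\langle v \rangle \cap$-type object, i.e. the ideal generated by the vertices reachable appropriately — but for this single-cycle local picture, $\bigcap_m B^m = 0$ since $p(0)=1\neq 0$ means no vertex lies in $B$). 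I will prove: $B^m = \langle p(c)^m\rangle$ for all $m\geq 1$, each $B^m$ is a non-graded ideal, the chain $B \supsetneq B^2 \supsetneq \cdots$ is strictly decreasing, and $\bigcap_{m=1}^\infty B^m = 0$.

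**Reduction to the Laurent/polynomial picture.** The key structural fact (already used in Lemma~\ref{No L => No prime intersection}, via \cite{AAPS}) is that the ideal $M := \langle v : v\in c^0\rangle$ generated by the vertices on $c$ is isomorphic to $M_\Lambda(K[x,x^{-1}])$, where $\Lambda$ indexes paths ending at $c$ without using all of $c$'s edges; and since $c$ has no exits, $B = \langle p(c)\rangle$ is contained in $M$ and corresponds, under the Morita equivalence $M_\Lambda(K[x,x^{-1}]) \sim K[x,x^{-1}]$, to the ideal $\langle p(x)\rangle$ of $K[x,x^{-1}]$ (here $p(x)$, having nonzero constant term, generates the same ideal in $K[x,x^{-1}]$ as it does as an element of $K[x]$, and $\langle p(x)\rangle \neq K[x,x^{-1}]$ precisely because $p$ is not a monomial, i.e. has degree $\geq 1$). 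First I would record: because $M$ has local units, products and intersections of ideals of $L$ that lie inside $M$ are computed inside $M$; so $B^m$, computed in $L$, agrees with $B^m$ computed in $M$. Then the lattice-of-ideals isomorphism (respecting products, since Morita equivalence is a categorical equivalence sending $IJ \mapsto I'J'$ for ideals) transports the computation to $K[x,x^{-1}]$, a PID, where $\langle p(x)\rangle^m = \langle p(x)^m\rangle$ and where $\langle p(x)^m\rangle \supsetneq \langle p(x)^{m+1}\rangle$ (strict, since $K[x,x^{-1}]$ is a domain and $p(x)$ is not a unit there — it is not a monomial) and $\bigcap_m \langle p(x)^m\rangle = 0$ (Krull intersection in a Noetherian domain, or directly: a nonzero Laurent polynomial is divisible by $p(x)^m$ for only finitely many $m$). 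Transporting back gives $B^m = \langle p(c)^m\rangle$, the strict descent, and $\bigcap_m B^m = 0$.

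**Non-gradedness.** For each $m$, $B^m = \langle p(c)^m\rangle$ is non-graded: if it were graded it would be generated by vertices and $v^H$-elements, but it contains no vertex — a vertex $w$ lies in $B^m$ iff, under the isomorphism, a unit lies in $\langle p(x)^m\rangle$, forcing $p(x)^m$ to be a monomial, contradicting $p(0)^m = 1 \neq 0$ while $\deg(p^m)=mn\geq 1$. Equivalently, one invokes (Lemma 3.6 / 3.8 of \cite{R-1} style reasoning, or Theorem 6.16 of \cite{T}): a graded ideal of a Leavitt path algebra is idempotent, whereas $B^m \neq B^{m+1} \cdot B = B^{m+1}$ — wait, more simply, $B \neq B^2$ already shows $B$ is non-graded, and the same applied to the cycle inside the quotient-free local picture shows each $B^m$ is non-graded. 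I will phrase it via: a graded ideal $J$ satisfies $J = J^2$ (Corollary 2.5, \cite{ABCR}), so if some $B^m$ were graded then $B^m = B^{2m}$, contradicting strict descent.

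**Main obstacle.** The one genuinely non-routine point is justifying that the lattice isomorphism induced by Morita equivalence \emph{respects products of ideals}, not merely inclusions and arbitrary intersections (the latter handled by Lemma~\ref{Lattice Iso}). For a ring $R$ with local units and $S = M_\Lambda(R)$, the standard correspondence $I \mapsto M_\Lambda(I)$ between ideals satisfies $M_\Lambda(I)M_\Lambda(J) = M_\Lambda(IJ)$ by direct matrix computation, so this is elementary here and I would simply verify it inline rather than cite it. A secondary subtlety is that $B$ sits inside $L$ and one must know $B^m$ (the ideal product in $L$) coincides with the product taken inside $M$; this follows because $M$ is idempotent with local units ($M^2 = M$, $M = M\cdot M$) and $B \subseteq M$, hence $L B^m L = M B^m M$ — a short argument using local units that I would include.
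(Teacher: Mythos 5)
Your argument is essentially correct, but it takes a genuinely different route from the paper, and it proves a slightly different (guessed) conclusion than the one actually asserted. The lemma's true conclusion has two parts: (a) $vB^{m}v=(vBv)^{m}$ for all $m>0$, and (b) $B^{m}\neq B^{n}$ for $0<m<n$. The paper proves (a) by a direct corner computation: a typical generator of $vB^{m}v$ has the form $v[\alpha_{1}\beta_{1}^{\ast}p(c)\gamma_{1}\delta_{1}^{\ast}]\cdots[\alpha_{m}\beta_{m}^{\ast}p(c)\gamma_{m}\delta_{m}^{\ast}]v$, and since $p(c)=vp(c)v$ one can insert $v$'s and regroup it as a product of $m$ elements of $vBv$; then (b) follows immediately from $vLv\cong K[x,x^{-1}]$ and $\langle p(x)^{m}\rangle\neq\langle p(x)^{n}\rangle$ in that PID, with no Morita machinery at all. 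You instead work with the whole ideal $M=\langle c^{0}\rangle\cong M_{\Lambda}(K[x,x^{-1}])$ and transport $B$ to $M_{\Lambda}(\langle p(x)\rangle)$ via the product-preserving correspondence $I\mapsto M_{\Lambda}(I)$; this is sound (your local-unit argument that the $L$-ideal and $M$-ideal generated by $p(c)$ coincide, and that $M_{\Lambda}(I)M_{\Lambda}(J)=M_{\Lambda}(IJ)$, are both fine, and, like the paper, you implicitly need $p$ nonconstant so that $p(x)$ is not a unit of $K[x,x^{-1}]$), and it delivers (b) plus extras the paper proves elsewhere or doesn't need here, namely $B^{m}=\langle p(c)^{m}\rangle$, non-gradedness, and $\bigcap_{m\geq1}B^{m}=0$ (the last being essentially the single-cycle case of Theorem \ref{Intersection of powers}, and requiring Lemma \ref{Lattice Iso} as you note). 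What your route does not deliver is part (a) itself, the identity $vB^{m}v=(vBv)^{m}$, which is part of the statement (understandably invisible to you, since the enumerated conclusions were cut off) and is exactly the elementary device by which the paper avoids the heavier structure theory of \cite{AAPS} and the matrix-ideal correspondence at this point. In short: the paper's proof is more self-contained and yields the corner identity (a); yours leans on machinery the paper only invokes later, but in exchange gives a sharper description of the powers of $B$.
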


\begin{enumerate}
\item[(a)] $vB^{m}v=(vBv)^{m}$\textit{, for any }$m>0$\textit{;}

\item[(b)] $B^{m}\neq B^{n}$\textit{ for all }$0<m<n$\textit{.}
\end{enumerate}

\begin{proof}
(a) Clearly $(vBv)^{m}\subseteq vB^{m}v$. We show that $vB^{m}v\subseteq
(vBv)^{m}$. Now a typical element of $vB^{m}v$ is a $K$-linear sum of finitely
many terms each of which, being a product of $m$ elements of $B$ followed by
multiplication by $v$ on both sides, is of the form
\begin{equation}
v[\alpha_{1}\beta_{1}^{\ast}p(c)\gamma_{1}\delta_{1}^{\ast}][\alpha_{2}
\beta_{2}^{\ast}p(c)\gamma_{2}\delta_{2}^{\ast}]\cdot\cdot\cdot\lbrack
\alpha_{m}\beta_{m}^{\ast}p(c)\gamma_{m}\delta_{m}^{\ast}]v\tag{$\ast$}
\end{equation}
where the $\alpha_{i},\beta_{i},\gamma_{i},\delta_{i}$ are all paths in $E$.
Now $(\ast)$ can be rewritten as{\small
\[
\lbrack v\alpha_{1}\beta_{1}^{\ast}p(c)v][v\gamma_{1}\delta_{1}^{\ast}
\alpha_{2}\beta_{2}^{\ast}p(c)v][v\gamma_{2}\delta_{2}^{\ast}\alpha_{3}
\beta_{3}^{\ast}p(c)v]\cdot\cdot\cdot p(c)v][v\gamma_{m-1}\delta_{m-1}^{\ast
}\alpha_{m}\beta_{m}^{\ast}p(c)\gamma_{m}\delta_{m}^{\ast}v]
\]
}which is clearly a product of $m$ elements of $vBv$ and hence belongs to
$(vBv)^{m}$. Thus $vB^{m}v\subseteq(vBv)^{m}$ and we are done.

(b) Since $c$ has no exits, $vLv\overset{\theta}{\cong}K[x,x^{-1}]$ where
$\theta$ maps $v$ to $1$, $c$ to $x$ and $c^{\ast}$ to $x^{-1}$. As
$vp(c)v=p(c)$, $vBv=B\cap vLv$ contains $p(c)$ and is the ideal generated by
$p(c)$ in $vLv$. Thus $vBv$ is isomorphic to the ideal $\left\langle
p(x)\right\rangle $ in $K[x,x^{-1}]$ under the map $\theta$. If $B^{m}=B^{n}$
for some $0<m<n$, then $vB^{m}v=vB^{n}v$. By (a), we then get $(vBv)^{m}
=(vBv)^{n}$ and this implies that, in the principal ideal domain $K[x,x^{-1}
]$, $\left\langle p(x)^{m}\right\rangle =\left\langle p(x)^{n}\right\rangle $
for $0<m<n$, a contradiction. Hence $B^{m}\neq B^{n}$ for all $0<m<n$.
\end{proof}

Observe that if $I$ is an ideal of $L_{K}(E)$ such that $I^{n}$ is a graded
ideal for some $n>1$, then $I$ must be a graded ideal. Indeed $I=I^{n}$.
Because, if $I\neq I^{n}$, then $I/I^{n}$ becomes a non-zero nilpotent ideal
in $L_{K}(E)/I^{n}$, a contradiction as $L_{K}(E)/I^{n}$ is isomorphic to a
Leavitt path algebra and its Jacobson radical is zero.

It then follows from the preceding observation that if $I$ is a non-graded
ideal of $L_{K}(E)$, then for any integer $n>0$, $I^{n}$ must also be a
non-graded ideal. But, as Theorem \ref{Intersection of powers} below shows,
the intersection of its powers $\bigcap\limits_{n=1}^{\infty}I^{n}$ must be a
graded ideal.

\begin{theorem}
\label{Intersection of powers}Let $I$ be a non-graded ideal of a Leavitt path
algebra $L$ of an arbitrary graph $E$. If $H=I\cap E^{0}$ and $S=\{v\in
B_{H}:v^{H}\in I\}$, then $\bigcap\limits_{n=1}^{\infty}I^{n}=I(H,S)$, the
largest graded ideal contained in $I$.
\end{theorem}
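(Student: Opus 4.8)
The plan is to establish the two inclusions $I(H,S)\subseteq\bigcap_{n=1}^{\infty}I^{n}$ and $\bigcap_{n=1}^{\infty}I^{n}\subseteq I(H,S)$ separately, using the structure theory of non-graded ideals from \cite{R-2} together with Lemma \ref{Power of a non-graded ideal}. First I would recall that $I(H,S)$ is the largest graded ideal contained in $I$: indeed $H=I\cap E^{0}$ and $\{v^{H}:v\in S\}$ are precisely the ``graded generators'' sitting inside $I$, and any graded ideal inside $I$ is determined by its vertices and breaking-vertex elements, which must lie in $H$ and $S$ respectively. Since $I(H,S)$ is graded, Corollary 2.5 of \cite{ABCR} gives $I(H,S)=I(H,S)^{n}$ for every $n$, and since $I(H,S)\subseteq I$ we get $I(H,S)=I(H,S)^{n}\subseteq I^{n}$ for all $n\geq 1$; hence $I(H,S)\subseteq\bigcap_{n=1}^{\infty}I^{n}$. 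This direction is routine.

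The substantive direction is $\bigcap_{n=1}^{\infty}I^{n}\subseteq I(H,S)$. Here I would pass to the quotient $\bar L=L/I(H,S)\cong L_{K}(E\backslash(H,S))$ and let $\bar I=I/I(H,S)$; then $\bigcap_n\bar I^{n}=\overline{\bigcap_n I^{n}}$ maps onto the image of $\bigcap_n I^n$, so it suffices to prove $\bigcap_{n=1}^{\infty}\bar I^{n}=0$ in $\bar L$. By construction $\bar I$ is a non-graded ideal of $\bar L$ that contains no vertices of $E\backslash(H,S)$ and no breaking-vertex elements, i.e. the graded part of $\bar I$ is zero. By the description of non-graded ideals in \cite{R-2} (together with Theorem 3.12 of \cite{R-1}), such an ideal $\bar I$ is built from finitely many cycles without exits: there are cycles $c_1,\dots,c_r$ in $E\backslash(H,S)$, each without exits, and polynomials $p_j(x)$ with nonzero constant term, so that $\bar I$ is generated (modulo its graded part, which is $0$) by the elements $p_j(c_j)$ sitting inside the corresponding ideals $M_{\Lambda_j}(K[x,x^{-1}])$. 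The point is that these ``cyclic components'' interact multiplicatively like a finite direct product, so it is enough to check $\bigcap_n\langle p(c)\rangle^{n}=0$ inside a single component.

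For a single cycle $c$ without exits based at $v$, Lemma \ref{Power of a non-graded ideal}(a) gives $vB^{m}v=(vBv)^{m}$ where $B=\langle p(c)\rangle$, and the proof of part (b) identifies $vBv$ with $\langle p(x)\rangle$ in $K[x,x^{-1}]$ via $\theta$. Since $p(x)$ has nonzero constant term and is therefore a nonzero, non-unit element of the PID $K[x,x^{-1}]$ (the constant-term-$1$ normalization means $p$ is not a unit unless $p=1$, in which case $B=L$, excluded), we have $\bigcap_{m=1}^{\infty}\langle p(x)^{m}\rangle=0$ in $K[x,x^{-1}]$ by unique factorization. Pulling back, $\bigcap_m vB^{m}v=\bigcap_m(vBv)^{m}=0$. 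To promote this from the corner $vLv$ to the whole ideal $B$ (equivalently to $M_{\Lambda}(K[x,x^{-1}])$), I would use that $M_{\Lambda}(K[x,x^{-1}])$ is Morita equivalent to $K[x,x^{-1}]$ and that powers of ideals correspond under this equivalence, so $\bigcap_m B^{m}$ corresponds to $\bigcap_m\langle p(x)^{m}\rangle=0$; alternatively, since $\bar I$ is a direct sum (as a two-sided ideal it decomposes over the disjoint cycle components) one reduces directly to the $vBv$ computation via local units. Assembling: $\bigcap_n\bar I^{n}=0$, hence $\bigcap_n I^{n}\subseteq I(H,S)$, giving equality.

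The main obstacle I anticipate is the bookkeeping in the second paragraph: making precise the claim that a non-graded ideal with trivial graded part decomposes, for the purpose of computing powers, as a finite ``direct product'' of single-cycle pieces of the form $M_{\Lambda}(K[x,x^{-1}])$, and that cross terms between distinct cycles vanish or are absorbed. This requires invoking the detailed classification of non-graded ideals (\cite{R-2}, Proposition 6 and its consequences) and checking that multiplication respects the decomposition — in particular that $\bar I^{n}$ is governed componentwise by $\langle p_j(c_j)\rangle^{n}$. Once that reduction is in hand, the rest is exactly Lemma \ref{Power of a non-graded ideal} plus the elementary fact $\bigcap_m\langle p(x)^m\rangle=0$ in a PID.
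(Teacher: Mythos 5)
Your overall strategy is the same as the paper's: reduce modulo $I(H,S)$, use Proposition 2 of \cite{R-2} to present the image $\bar I$ (which contains no vertices) as a direct sum of ideals $B_j=\langle p_j(c_j)\rangle$ attached to cycles without exits, and kill $\bigcap_n \bar I^n$ componentwise using Lemma \ref{Power of a non-graded ideal} and the identification of each cycle's ideal with $M_{\Lambda_j}(K[x,x^{-1}])$; the easy inclusion via $I(H,S)=I(H,S)^n$ (Corollary 2.5 of \cite{ABCR}) and Lemma 3.6 of \cite{R-1} is as in the paper, where it is left implicit. Where you differ is the endgame for a single cycle: you compute $\bigcap_m (vBv)^m=\bigcap_m\langle p(x)^m\rangle=0$ in the corner and then assert that $\bigcap_m B^m=0$ because ``powers of ideals correspond'' under the Morita equivalence. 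That step silently uses two facts you should make explicit: that the induced isomorphism of ideal lattices respects products (so $B^m$ goes to the $m$-th power of a fixed nonzero proper ideal of $K[x,x^{-1}]$ -- properness coming from the fact that $\bar I$ contains no vertices), and that it respects the \emph{infinite} intersection, which is exactly the paper's Lemma \ref{Lattice Iso}, isolated there for this very purpose. The paper deliberately avoids relying on product-preservation: it uses Lemma \ref{Power of a non-graded ideal}(b) to get $B^m\neq B^n$, transports only the resulting strictly descending chain across the lattice isomorphism, shows the chain's intersection in $K[x,x^{-1}]$ is zero by a descending-chain-condition argument in the PID quotient, and then applies Lemma \ref{Lattice Iso}; your more direct PID computation is fine once the two transfer facts are justified (or once you note that a nonzero ideal of $M_{\Lambda}(K[x,x^{-1}])$ has nonzero corner at the idempotent corresponding to $v$, so $v\bigl(\bigcap_m B^m\bigr)v=0$ already forces $\bigcap_m B^m=0$). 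Two smaller inaccuracies: the generating family $\{p_j(c_j)\}$ from \cite{R-2} may be indexed by an arbitrary, possibly infinite, set, so ``finitely many cycles'' is unjustified (harmless, since the componentwise argument and the decomposition $\bar I=\bigoplus_{j\in Y}B_j$ via Proposition 3.5 of \cite{AAPS} need no finiteness); and for the hard inclusion you only need that the image of $\bigcap_n I^n$ lies in $\bigcap_n\bar I^n$, not the equality $\overline{\bigcap_n I^n}=\bigcap_n\bar I^n$ you state, which is not obvious and not needed.
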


\begin{proof}
Now $L/I(H,S)\cong L_{K}(E\backslash(H,S))$. Identifying $\bar{I}=I/I(H,S)$
with its isomorphic image in $L_{K}(E\backslash(H,S))$, $\bar{I}$ is an ideal
containing no vertices and so, by (Proposition 2, \cite{R-2}), $\bar{I}$ is
generated by an orthogonal set $\{p_{j}(c_{j}):j\in Y\}$, where $Y$ is an
index set and, for each $j\in Y$, $c_{j}$ is a cycle without exits in
$E\backslash(H,S)$ and $p_{j}(x)\in K[x]$. For each $j\in Y$, let $A_{j}$ be
the ideal generated by the vertices on $c_{j}$. It was shown in (Proposition
3.5(ii), \cite{AAPS}) that the ideal sum $\sum\limits_{j\in Y}A_{j}%
=\bigoplus\limits_{j\in Y}A_{j}$. Since $p_{j}(c_{j})\in A_{j}$, $\bar
{I}=\bigoplus\limits_{j\in Y}B_{j}$ where $B_{j}$ is the ideal generated by
$p_{j}(c_{j})$. By Proposition 3.5(iii) of \cite{AAPS}, each $A_{j}$ is
isomorphic to $M_{\Lambda_{j}}(K[x,x^{-1}])$, where $\Lambda_{j}$ is a
suitable index set representing the number of paths that end at $c_{j}$ but do
not contain all the edges of $c_{j}$. So $B_{j}$ is isomorphic to an ideal
$N_{j}$ of $M_{\Lambda_{j}}(K[x,x^{-1}])$. As $M_{\Lambda_{j}}(K[x,x^{-1}])$
is Morita equivalent to $K[x,x^{-1}]$ (\cite{A}, \cite{AM}), there is a
lattice isomorphism $\phi:\mathbf{L\longrightarrow L}^{\prime}$, where
$\mathbf{L,L}^{\prime}$ are the lattices of ideals of $M_{\Lambda_{j}
}(K[x,x^{-1}])$ and $K[x,x^{-1}]$, respectively \cite{AF}. Now, for a fixed
$j$, $B_{j}^{m}\neq B_{j}^{n}$ for all $0<m<n$, by Lemma
\ref{Power of a non-graded ideal}. Hence the corresponding ideal $N_{j}$ also
satisfies $N_{j}^{m}\neq N_{j}^{n}$ for all $0<m<n$. So we get an infinite
descending chain of ideals
\begin{equation}
\phi(N_{j})\supset\phi(N_{j}^{2})\supset\ldots\supset\phi(N_{j}^{n}
)\supset\ldots\tag{$\ast\ast$}
\end{equation}
in $K[x,x^{-1}]$. Let $N=\bigcap\limits_{n=1}^{\infty}\phi(N_{j}^{n})$. If
$N\neq0$, then $K[x,x^{-1}]/N$ satisfies the descending chain condition, as
$K[x,x^{-1}]$ is principal ideal domain\ (see e.g. Theorem 32, Ch. IV-15,
\cite{ZS}). This is a contradiction since the chain $(\ast\ast)$ induces an
infinite descending chain of ideals in $K[x,x^{-1}]/N$. Hence $N=\bigcap
\limits_{n=1}^{\infty}\phi(N_{j}^{n})=0$. Then Lemma \ref{Lattice Iso} implies
that $\bigcap\limits_{n=1}^{\infty}B_{j}^{n}=0$ for each $j$. Since $\bar{I}$
is a (ring) direct sum of the ideals $B_{j}$, $\bigcap\limits_{n=1}^{\infty
}(\overline{I})^{n}=0$. This means that $\bigcap\limits_{n=1}^{\infty}
I^{n}=I(H,S)$. As noted in Lemma 3.6 of \cite{R-1}, $I(H,S)$ is the largest
graded ideal of $L$ contained in $I$. This completes the proof.
\end{proof}

\begin{remark}
In the proof of Theorem \ref{Intersection of powers}, observe that the direct
sum of ideals $\bar{I}=\bigoplus\limits_{j\in Y}B_{j}$ satisfies $(\bar
{I})^{m}\neq(\bar{I})^{n}$ for all positive integers $m\neq n$, as each
$B_{j}$ satisfies the same property. Clearly, the same holds for the ideal
$I$. Thus every non-graded ideal $I$ of a Leavitt path algebra satisfies
$I^{m}\neq I^{n}$ for all integers $0<m<n$. Moreover, by the statements
preceding Theorem \ref{Intersection of powers}, each $I^{n}$ must be a
non-graded ideal.
\end{remark}

W. Krull showed that if $I$ is an ideal of a noetherian integral domain $R$,
then $\bigcap\limits_{n=1}^{\infty}I^{n}=0$ and more generally, if $R$ is a
commutative noetherian ring, then $\bigcap\limits_{n=1}^{\infty}I^{n}=0$ if
and only if $1-x$ is not a zero divisor for all $x\in I$ (see Theorem 12,
Section 7 in \cite{ZS}). From Theorem \ref{Intersection of powers} and its
proof, using the fact that a non-zero graded ideal always contains a vertex,
one can easily obtain the following analogue of Krull's theorem for Leavitt
path algebras.

\begin{corollary}
\label{Krull's theorem} Suppose $I$ is an ideal of a Leavitt path algebra
$L_{K}(E)$ of an arbitrary graph $E$. Then the intersection 
${\displaystyle\bigcap\limits_{n=1}^{\infty}}I^{n}=0$ if and only if 
$I$ contains no vertices of $E$.
\end{corollary}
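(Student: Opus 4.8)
\textbf{Proof proposal for Corollary \ref{Krull's theorem}.} The plan is to deduce the Corollary directly from Theorem \ref{Intersection of powers} together with the basic structural facts about graded ideals already recorded in the Preliminaries. First I would dispose of the trivial direction: if $I$ contains a vertex $v$, then $v\in I^{n}$ for every $n\geq 1$, since $v=v\cdot v\cdots v\in I^{n}$, so $v\in\bigcap_{n=1}^{\infty}I^{n}$ and this intersection is nonzero. Contrapositively, if $\bigcap_{n=1}^{\infty}I^{n}=0$ then $I$ contains no vertices.

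For the converse, suppose $I$ contains no vertices of $E$. I would split into two cases according to whether $I$ is graded or not. If $I$ is graded, then $I=I(H,S)$ with $H=I\cap E^{0}$; but by hypothesis $H=\emptyset$, and I claim this forces $I=0$. This is exactly the fact invoked at the end of the proof of Theorem \ref{Intersection of powers}: a nonzero graded ideal always contains a vertex (a graded ideal $I(H,S)$ with $H=\emptyset$ must have $S\subseteq B_{\emptyset}=\emptyset$ as well, hence is the zero ideal). Therefore $I=0$ and trivially $\bigcap_{n=1}^{\infty}I^{n}=0$. If instead $I$ is non-graded, then Theorem \ref{Intersection of powers} applies directly: $\bigcap_{n=1}^{\infty}I^{n}=I(H,S)$ where $H=I\cap E^{0}$. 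Since $I$ contains no vertices, $H=\emptyset$, and as in the graded case $I(H,S)=I(\emptyset,\emptyset)=0$. Hence $\bigcap_{n=1}^{\infty}I^{n}=0$.

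Combining the two directions gives the equivalence. The only point that requires a word of justification — and the mildest of obstacles here — is the assertion that a graded ideal $I(H,S)$ with $H=\emptyset$ is zero; this follows because $B_{\emptyset}=\emptyset$ (there are no breaking vertices of the empty set, since every vertex of $E^{0}\setminus\emptyset=E^{0}$ would have to satisfy a vacuous-or-impossible condition), so $S=\emptyset$ and $I(\emptyset,\emptyset)$ is generated by the empty set. Since all the real work is already carried out in Theorem \ref{Intersection of powers}, the Corollary is essentially a matter of unwinding the case $H=\emptyset$, and no new techniques are needed.
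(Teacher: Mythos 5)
Your proof is correct and follows essentially the same route as the paper: the backward direction is immediate since a vertex is an idempotent lying in every power $I^{n}$, and the forward direction unwinds Theorem \ref{Intersection of powers} together with the fact that a nonzero graded ideal contains a vertex (equivalently, $H=\emptyset$ forces $B_{H}=\emptyset$, $S=\emptyset$, and $I(\emptyset,\emptyset)=0$). This is exactly the argument the paper sketches, so nothing further is needed.
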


\textbf{Acknowledgement:} We thank Professor Astrid an Huef for raising the
question of intersections of primitive ideals in a Leavitt path algebra and
Professor Pere Ara for helpful preliminary discussion on this question during
the CIMPA conference. The first and the second authors also thank Duzce
University Distance Learning Center (UZEM) for using UZEM facilities while
conducting this research.

\end{document}